\newtheorem{theorem}{Theorem}[section]
\newtheorem{lemma}[theorem]{Lemma}
\newtheorem{corollary}[theorem]{Corollary}
\newtheorem{thm}{Theorem}[section]
\newtheorem{lem}[thm]{Lemma}
\newtheorem{definition}[thm]{Definition}
\title{Planar graphs without cycles of lengths $4$ and $5$ and close triangles are DP-3-colorable}
\author{Yuxue Yin$^{1}$ \and   Gexin Yu$^{1,2}$}
\address{
$^{1}$\small Department of Mathematics, Central China Normal University, Wuhan, Hubei, 430079 China.\\
$^2$\small Department of Mathematics, The College of William and Mary, Williamsburg, VA, 23185, USA.
}
\thanks{The research of the last author was supported in part by the Natural Science Foundation of China (11728102) and the NSA grant H98230-16-1-0316.}
\email{gyu@wm.edu}
\begin{document}
\maketitle

\begin{abstract}
Montassier, Raspaud, and Wang (2006) asked to find the smallest positive integers $d_0$ and  $d_1$ such that planar graphs without $\{4,5\}$-cycles and $d^{\Delta}\ge d_0$ are $3$-choosable and planar graphs without $\{4,5,6\}$-cycles and $d^{\Delta}\ge d_1$ are $3$-choosable, where $d^{\Delta}$ is the smallest distance between triangles.  They showed that $2\le d_0\le 4$ and $d_1\le 3$. In this paper, we show that the following planar graphs are DP-3-colorable: (1) planar graphs without $\{4,5\}$-cycles and $d^{\Delta}\ge 3$ are DP-$3$-colorable, and (2) planar graphs without $\{4,5,6\}$-cycles and $d^{\Delta}\ge 2$ are DP-$3$-colorable. DP-coloring is a generalization of list-coloring, thus as a corollary, $d_0\le 3$ and $d_1\le 2$. We actually prove stronger statements that each pre-coloring on some cycles can be extended to the whole graph.
\end{abstract}

\section{Introduction}
Coloring of planar graphs has a long history.   The famous Four Color Theorem states that every planar graph is properly $4$-colorable, where a graph is properly $k$-colorable if there is a function $c$ that assigns an element $c(v) \in [k]:=\{1,2,\ldots, k\}$ to each $v \in V(G)$ so that adjacent vertices receive distinct colors.

Gr\"otzsch~\cite{G59} showed every planar graph without 3-cycles is 3-colorable. But it is NP-complete to decide whether a planar graph is $3$-colorable.  There were heavy research on sufficient conditions for a planar graph to be $3$-colorable.  Three typical conditions are the following:

\begin{itemize}
\item One is in the spirit of the Steinberg's conjecture (recently disproved) or Erd\H{o}s's problem that forbids cycles of certain lengths. Borodin, Glebov, Raspaud, and Salavatipour \cite{BGRS05} showed that planar graphs without $\{4,5,6,7\}$-cycles are $3$-colorable, and it remains open to know if one can allow $7$-cycle.

\item Havel \cite{H69} proposed to make $d^{\Delta}$ large enough, where $d^{\Delta}$ is the smallest distance between triangles. Dvo\^{r}\'{a}k, Kral, and Thomas\cite{DKT16} showed that $d^{\Delta}\ge 10^{100}$ suffices.

\item The Bordaux approach \cite{BR03} combines the two kinds of conditions. Borodin and Glebov \cite{BG10} showed that planar graphs without $5$-cycles and $d^{\Delta}\ge 2$ are $3$-colorable. It is conjectured \cite{BR03} that $d^{\Delta}\ge 1$ suffices.
\end{itemize}

Vizing~\cite{V76}, and independently Erd\H{o}s, Rubin, and Taylor~\cite{ERT79} introduced list coloring as a generalization of proper coloring. A \emph{list assignment} $L$ gives each vertex $v$ a list $L(v)$ of available colors. A graph $G$ is {\em $L$-colorable} if there is a proper coloring $c$ of $V(G)$ such that $c(v)\in L(v)$ for each $v\in V(G)$. A graph $G$ is {\em $k$-choosable} if $G$ is $L$-colorable for each $L$ with $|L(v)|\ge k$. Clearly, a proper $k$-coloring is an $L$-coloring when $L(v)=[k]$ for all $v\in V(G)$. 


While list coloring provides a powerful tool to study coloring problems, some important techniques used in coloring (for example, identification of vertices) are not feasible in list coloring. Therefore, it is often the case that a condition that suffices for coloring is not enough for the corresponding list-coloring. Thomassen~\cite{T94,T95} showed that every planar graph is $5$-choosable and every planar graph without $\{3,4\}$-cycles is 3-choosable, but  Voigt~\cite{V93, V95} gave non-$4$-choosable planar graphs and non-$3$-choosable triangle-free planar graphs.

Sometimes we do not know if a stronger condition would help.  For example, Borodin (\cite{B96}, 1996) conjectured that  planar graphs without cycles of lengths from $4$ to $8$ are $3$-choosable.

In the spirit of Bordeaux conditions, Montassier, Raspaud, and Wang~\cite{MRW06} gave the following conditions for a planar graph to be $3$-choosable:

\begin{theorem}[Montassier, Raspaud, and Wang \cite{MRW06}]\label{MRW}
A planar graph $G$ is $3$-choosable if
\begin{itemize}
\item $G$ contains no cycles of lengths $4$ and $5$ and $d^{\Delta}\ge 4$, or
\item $G$ contains no cycles of lengths from $4$ to $6$ and $d^{\Delta}\ge 3$.
\end{itemize}
There exist planar graphs without $4$-, $5$-cycles and $d^{\Delta}=1$ that are not $3$-choosable.
\end{theorem}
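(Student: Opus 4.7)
The plan is to prove Theorem~\ref{MRW} by a discharging argument applied to a minimum counterexample. Let $G$ be a plane embedding satisfying either hypothesis---(i) no $\{4,5\}$-cycles and $d^{\Delta}\ge 4$, or (ii) no $\{4,5,6\}$-cycles and $d^{\Delta}\ge 3$---and suppose $G$ is not $3$-choosable while being minimum with respect to $|V(G)|+|E(G)|$ among such graphs. Fix a list assignment $L$ with $|L(v)|\ge 3$ for which $G$ admits no proper $L$-coloring. Both hypotheses are inherited by subgraphs, so minimality will produce strong structural restrictions.

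First I would derive local reductions. Standard arguments show $G$ is $2$-connected and $\delta(G)\ge 3$: a vertex of degree at most $2$ can be removed and an $L$-coloring of $G-v$ extended greedily since $v$ has at most two colored neighbors but three listed colors. More refined list-extension arguments---coloring a carefully chosen subgraph and checking that at least one list color remains at each uncolored vertex---rule out a catalogue of ``light'' configurations: for instance, adjacent $3$-vertices on a common triangle, a $3$-vertex with two $3$-neighbors inside a $(\ge 6)$-face, or short paths of $3$-vertices bridging two big faces. Because triangles are pairwise far apart, each such configuration is localized and sees at most one triangle, which keeps the reducibility bookkeeping manageable.

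Next I would set up the discharging. Using Euler's formula, assign initial charge $\mu(v)=d(v)-4$ to each vertex and $\mu(f)=d(f)-4$ to each face, so $\sum \mu = -8$. In both cases the only negative charges come from triangles, which carry $-1$; every other face is ``big,'' meaning length $\ge 6$ in case (i) and $\ge 7$ in case (ii). A natural rule is that each big face donates $\tfrac{d(f)-4}{d(f)}$ uniformly to its incident vertices, each vertex of degree $\ge 4$ retains what it needs and forwards the surplus to incident triangles, and each triangle collects $\tfrac{1}{3}$ along each of three disjoint ``routes'' through neighbouring big faces and their incident vertices. The distance hypothesis $d^{\Delta}\ge 4$ (respectively $\ge 3$) is exactly what is needed to ensure that any big face encountered along such a route is claimed by at most one triangle, so the accounting is conflict-free.

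The main obstacle will be calibrating the rules so that after discharging every face and every vertex has non-negative final charge simultaneously in both parts of the theorem. The tight case in part~(i) is a triangle whose three adjacent big faces are all $6$-faces realising the minimum separation $d^{\Delta}=4$, which leaves almost no slack; in part~(ii) the analogous bottleneck is a triangle meeting three $7$-faces at distance exactly $3$. Handling these extremal configurations will require a careful case analysis that exploits the forbidden-cycle lengths (ruling out problematic chords and short triangle-to-triangle paths) together with the extra $\ge 5$-vertices and absent light configurations guaranteed by the previous reductions, so that enough auxiliary charge can be routed in to absorb the $-1$ at each triangle.
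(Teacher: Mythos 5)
This statement is not proved in the paper at all: it is quoted from Montassier, Raspaud, and Wang \cite{MRW06} as background, so there is no in-paper proof to match your approach against. Evaluating your proposal on its own terms, it is an outline rather than a proof, and it has two concrete gaps. First, you do not address the third assertion of the theorem, the existence of a planar graph without $4$- and $5$-cycles, with $d^{\Delta}=1$, that is not $3$-choosable. No discharging argument can produce this; it requires an explicit construction together with a specific list assignment admitting no proper coloring, and your plan never mentions it.

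Second, the discharging core is not actually carried out, and the rules as stated do not balance. With $\mu(v)=d(v)-4$ and $\mu(f)=d(f)-4$ it is false that ``the only negative charges come from triangles'': every $3$-vertex also starts at $-1$, and after $\delta(G)\ge 3$ is established, $3$-vertices are ubiquitous. Under your rule that each big face donates $\tfrac{d(f)-4}{d(f)}$ to each incident vertex, a $3$-vertex lying on a triangle receives only $2\cdot\tfrac{d(f)-4}{d(f)}$, which is $\tfrac23$ when both big faces are $6$-faces, so it ends at $-\tfrac13$; meanwhile each $6$-face has distributed its entire surplus to its vertices and has nothing left to route to the triangle, which still needs $1$. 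The catalogue of reducible configurations is likewise only gestured at (``a catalogue of `light' configurations''), yet identifying exactly which configurations are list-reducible and verifying that the surviving graph carries enough charge is the entire content of the theorem --- as the present paper's own proofs of Theorems~\ref{main0} and~\ref{main2} illustrate, this step requires named structural lemmas (such as Lemmas~\ref{big structure Lemma}--\ref{2bad-6}) and a delicate case analysis around the outer face. Deferring the ``calibration'' to future work means the proposal does not yet constitute a proof of either part (i) or part (ii).
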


They asked for the optimal conditions on $d^{\Delta}$ for the same conclusions.

Very recently, Dvo\^{r}\'{a}k and Postle~\cite{DP17} introduced DP-coloring (under the name correspondence coloring), which helped them confirm the conjecture by Borodin mentioned above. DP-coloring is a generalization of list-coloring, but it allows identification of vertices in some situations.

\begin{definition}
Let $G$ be a simple graph with $n$ vertices, and $L$ be a list assignment of $V(G)$. For each vertex $u\in V(G)$, let $L_u=\{u\}\times L(u)$. For each edge $uv$ in $G$, let $M_{uv}$ be a matching (maybe empty) between the sets $L_u$ and $L_v$ and let $\mathcal{M}_L = \{ M_{uv} : uv \in E(G)\}$, called the \emph{matching assignment}. Let $G_L$ be the graph that satisfies the following conditions
\begin{itemize}
\item $V(G_L) = \cup_{u\in V(G)} L_u$.
\item for all $u \in V(G)$, the set $L_u$ forms a clique.
\item if $uv \in E(G)$, then the edges between $L_u$ and $L_v$ are those of $M_{uv}$
\item if $uv \notin E(G)$, then there are no edges between $L_u$ and $L_v$
\end{itemize}
If $G_L$ contains an independent set of size $n$, then $G$ has an {\em $\mathcal{M}_L$-coloring}. The graph $G$ is {\em DP-$k$-colorable} if, for each $k$-list assignment $L$ and each matching assignment $\mathcal{M}_L$ over $L$, it has an $\mathcal{M}_L$-coloring. The minimum $k$ such that $G$ is DP-$k$-colorable is the {\em DP-chromatic number} of $G$, denoted by $\chi_{DP}(G)$.
\end{definition}

As in list coloring, we refer to the elements of $L(v)$ as colors and call the element $i\in L(v)$ chosen in the independent set of an $\mathcal{M}_L$-coloring as the color of $v$.


We should note that DP-coloring and list coloring can be quite different. For example, Bernshteyn~\cite{B16} showed that the DP-chromatic number of every graph $G$ with average degree $d$ is $\Omega(d/\log d)$, while Alon \cite{A00} proved that $\chi_l(G)=\Omega(\log d)$ and the bound is sharp.


Much attention was drawn on this new coloring, see for example, \cite{B16,B17,BK17a,BK17b,BKP17,BKZ17,KO17a,KO17b,KY17, LLNSY18, LLRYY18b}.   We are interested in DP-coloring of planar graphs. Dvo\v{r}\'ak and Postle~\cite{DP17} noted that Thomassen's proofs~\cite{T94} for choosability can be used to show $\chi_{DP}(G)\le5$ if $G$ is a planar graph, and $\chi_{DP}(G)\le3$ if $G$ is a planar graph with no 3-cycles and 4-cycles. Some sufficient conditions were given in \cite{KO17a,KO17b,LLNSY18} for a planar graph to be DP-$4$-colorable.  Sufficient conditions for a planar graph to be DP-$3$-colorable are obtained in \cite{LLYY18} and \cite{LLRYY18b}. In particular,

\begin{theorem}\label{DP3}(\cite{LLYY18, LLRYY18b})
A planar graph is DP-3-colorable if it has no cycles of length $\{4,9, a,b\}$, where $(a,b)\in \{(5,6), (5,7), (6,7), (6,8), (7,8)\}$.
\end{theorem}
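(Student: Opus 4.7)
The plan is to prove the five stated sub-theorems in parallel via a minimum counterexample combined with discharging. For each excluded pair $(a,b)$, take a plane graph $G$ of smallest order that avoids all cycles of lengths in $\{4,9,a,b\}$ yet fails to be DP-$3$-colorable, together with a witnessing $3$-list assignment $L$ and matching assignment $\mathcal{M}_L$ for which $G_L$ contains no independent set of size $|V(G)|$.

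First I would establish the standard minimality reductions: $G$ is $2$-connected and $\delta(G)\ge 3$. Any vertex of degree at most $2$ can be deleted, the smaller graph DP-$3$-colored by minimality, and the coloring extended because in $G_L$ the clique $L_v$ has at most two matched partners already colored. Since DP-coloring does not permit identification of vertices, the usual list-coloring reductions around triangles must be replaced by the straightening technique: along any spanning tree of a relevant subgraph, relabel each $L_u$ so that the matchings become the identity, leaving a single residual permutation per extra edge; the $\mathcal{M}_L$-colorability around a short cycle is then governed by whether this permutation is trivial. Using this, I would catalogue reducible configurations, such as a $3$-vertex adjacent to another $3$-vertex, light triangles, and specific adjacency patterns between a $3$-face and nearby short faces, tuned to the cycle-length restrictions of each case.

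Next I would apply Euler's formula with initial charges $\mu(v)=d(v)-4$ for each $v\in V(G)$ and $\mu(f)=d(f)-4$ for each face $f$, giving total charge $-8$. The exclusion of $4$- and $9$-faces and of one of $\{5,6,7,8\}$ sharply narrows the spectrum of faces that can donate charge, so I would design rules sending prescribed amounts from faces of length at least $7$ and from high-degree vertices toward $3$-faces and $3$-vertices. The main obstacle, and the reason each of the five cases requires separate treatment, is the exhaustive local analysis around triangles and light vertices: one must verify that whenever a $3$-face or a $3$-vertex would end with negative charge, some configuration in its neighborhood must be reducible and hence cannot occur in $G$. Once nonnegativity is established at every vertex and face, the total charge is at least $0$, contradicting the initial sum of $-8$ and completing the argument in each case.
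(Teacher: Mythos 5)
The first thing to note is that the paper does not prove Theorem~\ref{DP3} at all: it is quoted from \cite{LLYY18} and \cite{LLRYY18b} (the latter still in preparation), so there is no in-paper argument to measure your proposal against. Judged on its own, your outline is the right genre of argument --- minimal counterexample, straightening of the matchings, reducible configurations, discharging from $\mu(v)=d(v)-4$ and $\mu(f)=d(f)-4$ --- but it stops exactly where the content begins. All five cases live or die on the specific catalogue of reducible configurations and the specific discharging rules, none of which you supply; and at least one configuration you do name, a $3$-vertex adjacent to another $3$-vertex, is certainly not reducible in a minimum-degree-$3$ graph (a minimal counterexample could a priori be $3$-regular), so the catalogue cannot be waved at in this way.

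There is also a structural gap in the induction you set up. A bare minimal counterexample to DP-$3$-colorability is too weak a hypothesis to carry out the identification/deletion reductions. When you straighten a path of edges and then identify two vertices (the move this paper itself uses in Lemmas~\ref{bad-6}, \ref{2bad-6} and \ref{7-face}), you must rule out that the identification creates a forbidden short cycle or a multiple edge, and the only way to do that is to forbid separating cycles of the corresponding lengths --- which in turn requires the stronger induction hypothesis that a DP-$3$-coloring of any such cycle extends to the whole graph. This is precisely why the abstract of this paper and Theorems~\ref{main0} and~\ref{main2} are phrased as precoloring-extension statements, and why \cite{LLYY18} proves Theorem~\ref{DP3} in that stronger form (extendability from a precolored face, with an outer-face charge of $d(C_0)+6$ absorbing the deficit). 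Your proposal, as written, would stall at the first reduction that needs a separating-cycle argument. Finally, your remark that DP-coloring ``does not permit identification of vertices'' has it backwards: the whole advantage of DP-coloring over list coloring exploited here is that, after straightening, identification \emph{is} available.
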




In this paper, we use DP-coloring to improve the results in Theorem~\ref{MRW}.  To state our results, we have to introduce {\em extendability}. Let $G$ be a graph and $C$ be a subgraph of $G$. Then $(G,C)$ is DP-3-colorable if every DP-3-coloring of $C$ can be extended to $G$.

\begin{figure}[ht]\label{bad-9}
\includegraphics[scale=0.45]{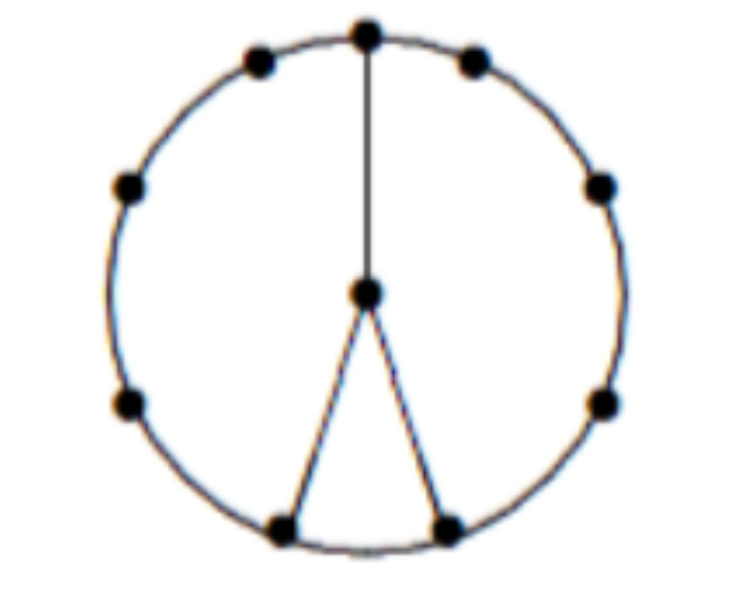}
\includegraphics[scale=0.45]{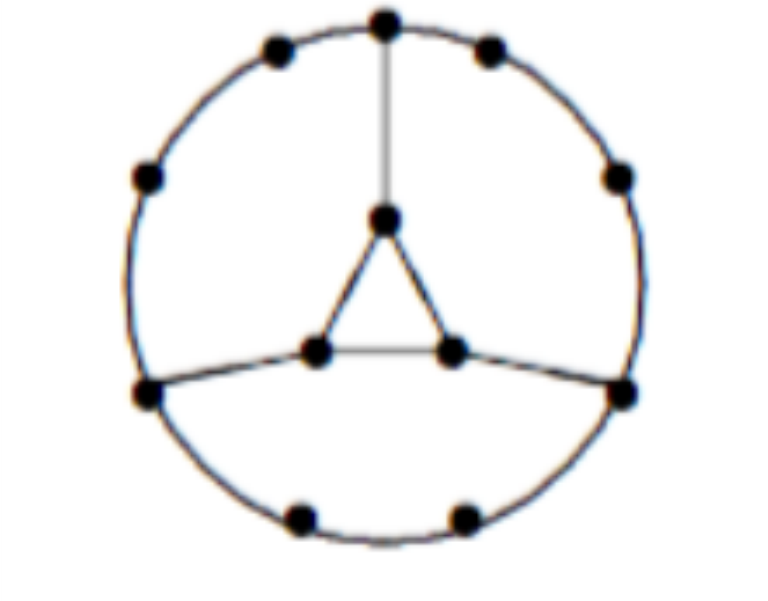}
\caption{bad 9-cycles.}
\end{figure}

A $9$-cycle $C$ is {\em bad} if it is the outer $9$-cycle in a subgraph isomorphic to the graphs in Figure~\ref{bad-9}. A $9$-cycle is {\em good} if it is not a bad $9$-cycle.

\begin{theorem}\label{main0}
Let $G$ be a planar graph that contains no $\{4,5\}$-cycles and $d^{\Delta}\ge 3$. Let $C_0$ be a $3$-, $6$-, $7$-, $8$-cycle or a good $9$-cycle in $G$. Then each DP-$3$-coloring of $C_0$ can be extended to $G$.
\end{theorem}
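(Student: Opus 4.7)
The plan is to take a minimum counterexample $(G,C_0)$, where minimality is with respect to $|V(G)|+|E(G)|$, embed $G$ in the plane with $C_0$ bounding the outer face, and derive a contradiction by a discharging argument. First I would establish standard global structure: $G$ is $2$-connected, each vertex of $V(G)\setminus V(C_0)$ has degree at least $3$, and no chord or short separating cycle $C'$ of length at most $9$ splits $G$ nontrivially unless $C'$ is bad. The role of admitting a good $9$-cycle as $C_0$ is precisely to keep the induction closed in this last step: when a short separating cycle $C'$ is found, by minimality one can first extend the given precoloring of $C_0$ to the outer annular region bounded by $C'$, and then reapply the theorem to the inner region with $C'$ as the new precolored boundary. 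The admissible list of boundary lengths $\{3,6,7,8\}$ together with good $9$-cycles is exactly what this two-step extension requires.

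Next I would compile a list of reducible interior configurations. The DP reduction recipe is to choose a small set $S\subseteq V(G)\setminus V(C_0)$, apply minimality to DP-$3$-color $G-S$ extending the given precoloring, and then complete the coloring on $S$ using the matching $\mathcal{M}_L$. Completion succeeds whenever, after each vertex of $S$ removes one forbidden color per outside neighbor through its matching, there is an ordering of $S$ with an available color at each step. In this way I would exclude interior $2$-vertices, certain adjacent pairs of $3$-vertices sharing a $3$-face, triangles whose vertices all have small degree, and similar Bordeaux-style obstructions, using $d^{\Delta}\ge 3$ to ensure that triangles are well separated so that reductions around distinct triangles do not interact.

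Finally I would run the discharging. Assign initial charge $\mu(v)=d(v)-4$ to each vertex and $\mu(f)=|f|-4$ to each face, with the outer face adjusted so that the total charge is $-8$ by Euler's formula. Since $G$ has no $4$- or $5$-cycles, every internal face is a triangle or has length at least $6$; since $d^{\Delta}\ge 3$, every triangle is surrounded by $\ge 6$-faces at a controlled positional distance and can draw charge from them without conflict. The discharging rules would move charge from large faces and high-degree vertices into triangles and $3$-vertices, and the target is to show every element of $V\cup F$ ends with non-negative charge, contradicting the total of $-8$.

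I expect the main obstacle to be twofold. First, in the DP setting one cannot identify vertices along coinciding colors as in list coloring, so several standard reducibility arguments need to be reproven against an adversarial matching, which often doubles the case analysis and forces one to track precisely which color-pairs are matched versus unmatched on each edge. Second, because $C_0$ itself may be a good $9$-cycle and triangles are allowed to sit quite close to $C_0$, the discharging near the outer face is delicate: one must verify that the obstructions in Figure~\ref{bad-9} are exactly the configurations whose absence is needed for the boundary charge to balance, so that the definition of ``good'' is both necessary and sufficient for the argument to close.
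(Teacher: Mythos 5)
There is a genuine gap, and it sits at the heart of your plan: you assert that ``in the DP setting one cannot identify vertices along coinciding colors as in list coloring,'' and accordingly you propose to prove all reducible configurations by the deletion-and-completion recipe (remove a set $S$, color $G-S$ by minimality, finish greedily on $S$). This has the situation exactly backwards. Identification of vertices is the technique that is \emph{unavailable} in list coloring (the lists of the two vertices need not agree) but \emph{becomes available again} in DP-coloring: one can rename the lists along a path so that its edges are straight, identify two nonadjacent vertices of the configuration in $G$ minus part of the configuration, color the smaller graph by minimality, and then pull the common color back to both vertices before completing. This is the entire reason the paper works with DP-coloring rather than choosability, and it is what lets the authors push $d^{\Delta}\ge 4$ down to $d^{\Delta}\ge 3$. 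Concretely, the configurations that make the discharging close --- an internal $6$-face $[v_1\cdots v_6]$ adjacent to an internal $(3,3,3)$-face $[v_1v_2v_{12}]$ with $d(v_3)=d(v_6)=3$, and the two-$6$-face configuration around a $4$-vertex $y$ flanking a $(3,3,3)$-face --- are \emph{not} reducible by deletion alone: after deleting the obvious set $S$, some vertex of $S$ retains too many outside neighbors and the greedy completion fails. The paper handles them by identifying $v_4$ with the third neighbor of $v_{12}$ (respectively $z$ with the fourth neighbor of $y$), checking via the no-separating-$\{3,6,7,8\}$-or-good-$9$-cycle lemma that the identification creates no $4$- or $5$-cycle, no close triangles, and no chord of $C_0$, and only then completing. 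Without this tool your list of reducible configurations will be too weak for any discharging to succeed, since internal $(3,3,3)$-faces surrounded by $6$-faces full of $3$-vertices cannot be excluded.

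Two secondary points. First, your initial charge $\mu(v)=d(v)-4$, $\mu(f)=|f|-4$ gives every internal $3$-vertex charge $-1$; since such vertices are abundant and receive nothing naturally, this normalization makes the count very hard to balance. The paper uses $\mu(v)=2d(v)-6$ and $\mu(f)=d(f)-6$ (with $\mu(C_0)=d(C_0)+6$), so that $3$-vertices and $6$-faces are neutral and only triangles and the outer face need attention. Second, your intuition about the bad $9$-cycles is in the right direction but needs to be made precise at a specific place: the two pictured configurations are exactly the cases where the outer-face charge computation degenerates (e.g.\ $d(C_0)=9$ with three special $6$-faces and no spare charge), and the goodness hypothesis is invoked there, not diffusely throughout the boundary analysis.
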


\begin{theorem}\label{main2}
Let $G$ be a planar graph that contains no $\{4,5,6\}$-cycles and $d^{\Delta}\ge 2$. Let $C_0$ be a cycle of length $7, 8, 9$ or $10$ in $G$. Then each DP-$3$-coloring of $C_0$ can be extended to $G$.
\end{theorem}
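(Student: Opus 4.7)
The plan is to argue by contradiction via a minimum counterexample combined with discharging. Let $(G,C_0)$ be a counterexample minimizing $|V(G)|+|E(G)|$, with $C_0$ taken as the outer face of a fixed planar embedding. Since $G$ has no $4$-, $5$-, or $6$-cycle, every internal face is either a triangle or has length at least $7$; since $d^{\Delta}\geq 2$, each triangle is an isolated face whose three neighbors all have length at least $7$. Standard minimality arguments force $G$ to be $2$-connected with no internal vertex of degree less than $3$, no chord of $C_0$ inducing a separating cycle of length in $\{4,5,6\}$, and no other short separating cycles; moreover the precoloring of $C_0$ is inherited by any smaller planar graph produced by a reduction.

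Next I would derive a list of reducible configurations. The guiding principle for DP-coloring is that if an internal vertex $v$ has at most two neighbors whose colors are already fixed through the matching, then a free choice remains at $v$. From this one obtains forbidden patterns such as internal $3$-vertices with specific neighbor types, short paths of internal $3$-vertices lying on a common $7^{+}$-face, and local combinations of an isolated triangle with nearby $3$-vertices. Each reduction removes a small subgraph, applies minimality to a smaller planar graph on which the hypotheses $\{4,5,6\}$-cycle-free and $d^{\Delta}\geq 2$ are preserved, and then extends the DP-coloring back. The freedom to identify colors across non-edges through the matching (unavailable in list coloring) is what should drive the improvement from $d^{\Delta}\geq 3$ in Theorem~\ref{MRW} to $d^{\Delta}\geq 2$ here.

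I then set up discharging via Euler's formula: assign $\mu(v)=d(v)-4$ to each vertex, $\mu(f)=d(f)-4$ to each internal face, and $\mu(C_0)=d(C_0)+4$ to the outer face, so that
\[
\sum_{v\in V(G)}\mu(v)+\sum_{f\in F(G)}\mu(f)=0.
\]
Only triangles and internal $3$-vertices carry negative initial charge, while $7^{+}$-faces, $5^{+}$-vertices, and the outer face hold the reserve. Natural rules are: each $7^{+}$-face sends $1$ to every adjacent triangle and a smaller fixed amount (say $\tfrac{1}{3}$) to each incident internal $3$-vertex; additional local rules handle $3$-vertices not incident to a $7^{+}$-face; and $C_0$ pays a bounded amount to its incident internal triangles and $3$-vertices. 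The target is to verify that every internal vertex and every internal face ends with nonnegative charge while the outer face retains strictly less than $d(C_0)+4$, contradicting the total of $0$.

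The main obstacle will be the tightness of internal $7$-faces that are simultaneously adjacent to a triangle and surrounded by many incident $3$-vertices: such a face starts with charge $3$, must pay $1$ to the triangle, and may need to support several additional $3$-vertices, nearly saturating the budget. Ruling out the worst such configurations through a careful catalog of reducible subgraphs is where the bulk of the case analysis will lie. A secondary delicacy is that $|C_0|$ ranges over $\{7,8,9,10\}$, so the discharging rules near $C_0$ and the reducibility arguments for vertices adjacent to $C_0$ must be calibrated to work uniformly across all four possible boundary lengths, and the precoloring must be respected whenever a small configuration is removed and replaced.
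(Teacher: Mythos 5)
Your outline has the right general shape (minimal counterexample, reducible configurations, discharging), which is indeed the paper's strategy, but as written it has three concrete gaps. First, no reducible configuration is actually exhibited or proved. The ``guiding principle'' you state --- a vertex with at most two precolored neighbors retains a free choice --- is just greedy degeneracy and is not enough: the tight configurations here (e.g.\ an internal $7$-face all of whose vertices are $3$-vertices, adjacent to two internal $(3,3,3)$-faces) are not removable by a greedy order alone. The paper needs two stronger tools: a ``near-$2$-degenerate'' ordering lemma in which the first and last vertices of the deleted subgraph are joined by an edge, and --- this is where DP-coloring genuinely enters --- \emph{identification of two vertices of $G$} at distance at least $3$ after renaming lists so that a path of edges between them becomes straight. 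You allude to ``identifying colors across non-edges through the matching,'' but that is not the mechanism; without the vertex-identification reductions (the paper's Lemma on internal $7$-faces) the discharging cannot close.

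Second, your proposed charges and rules do not balance. With $\mu(f)=d(f)-4$ an internal $7$-face starts with $3$; it can be adjacent to two triangles (costing $2$ under your rule) and be incident to seven internal $3$-vertices each demanding $\tfrac13$ (costing $\tfrac73$), for a total outflow of $\tfrac{13}{3}>3$, and such faces are not all reducible. The paper avoids this entirely by using $\mu(v)=2d(v)-6$, so that $3$-vertices need no charge at all and only triangles are in deficit; the whole case analysis then concentrates on internal $7$-faces adjacent to two internal triangles, which is exactly where the reducibility lemma applies. Third, the verification that the outer face ends with \emph{strictly positive} charge is the most delicate part of the argument --- it requires counting edges between $C_0$ and $G-C_0$, tracking ``special'' $7$-faces adjacent to two internal triangles, and a lower bound on what each big face adjacent to $C_0$ returns via charge-friendly paths, with separate cases for $d(C_0)\in\{7,8,9,10\}$ --- and your proposal defers all of it. So the plan is directionally reasonable but none of the three load-bearing components (reducible configurations, a balancing rule set, the boundary accounting) is actually supplied.
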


The proofs of Theorem~\ref{main0} and ~\ref{main2} use identification of vertices.  We shall note that  the planar graphs in the following corollary was not known to be 3-choosable.

\begin{corollary}
The following planar graphs are DP-$3$-colorable (thus also $3$-choosable):
\begin{itemize}
\item no $\{4,5\}$-cycles and $d^{\Delta}\ge 3$, or
\item no $\{4,5,6\}$-cycles and $d^{\Delta}\ge 2$.
\end{itemize}
\end{corollary}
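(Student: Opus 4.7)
The plan is to deduce the corollary directly from Theorems \ref{main0} and \ref{main2}, together with two auxiliary facts: (i) every cycle is DP-$3$-colorable, by a simple greedy traversal (orient the cycle, choose any colour at $v_0$, then at each successive $v_i$ use a colour not matched to the colour of $v_{i-1}$, noting that the last vertex has at most two forbidden colours out of three); and (ii) planar graphs with no $3$- or $4$-cycles are DP-$3$-colorable, which is the DP-analogue of Thomassen's theorem noted by Dvo\v{r}\'ak and Postle in the paragraph preceding Theorem \ref{DP3}.

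Fix a planar graph $G$ satisfying one of the two hypotheses, and split on whether $G$ contains a triangle. If $G$ is triangle-free then under either hypothesis $G$ has neither $3$- nor $4$-cycles, and fact (ii) gives $\chi_{DP}(G)\le 3$. Otherwise $G$ contains a triangle $T$. For the first bullet I take $C_0:=T$: by (i), $C_0$ is DP-$3$-colorable for any $3$-list assignment and matching assignment, and Theorem \ref{main0} extends any such colouring to all of $G$. For the second bullet, pick $C_0$ to be any cycle of length $7, 8, 9$, or $10$ in $G$; then (i) again supplies an initial colouring of $C_0$ and Theorem \ref{main2} extends it to $G$.

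The main obstacle lies in the second bullet when $G$ has a triangle but contains no cycle of length $7$, $8$, $9$, or $10$, so that every cycle of $G$ has length $3$ or $\ge 11$. I would address this by a minimum-counterexample reduction: a vertex of degree at most $2$ can always be deleted and recoloured at the end (it retains an available colour after its neighbours are coloured in any matching assignment), so we may assume $\delta(G)\ge 3$. An Euler's-formula/discharging calculation --- assigning charges $d(v)-4$ to vertices and $\ell(f)-4$ to faces, whose total is $-8$, and noting that $d^{\Delta}\ge 2$ forces the three external edges of each triangle to reach pairwise disjoint non-triangle vertex sets, so each triangular face of charge $-1$ can be compensated by transferring $+1/3$ from each of the three long faces it touches --- is expected to yield a non-negative total, contradicting the value $-8$ and ruling out this residual case. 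Verifying this discharging step rigorously is the one non-trivial piece of work; everything else is a formal appeal to the already-proved extendability theorems.
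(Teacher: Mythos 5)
Your reduction for the first bullet, and for the generic case of the second bullet, is essentially the paper's: dispose of the triangle-free case via the Dvo\v{r}\'ak--Postle observation that planar graphs with no $3$- or $4$-cycles are DP-$3$-colorable, and otherwise feed a triangle (resp.\ a cycle of length $7$--$10$), which admits a DP-$3$-coloring by the greedy traversal you describe, into Theorem~\ref{main0} (resp.\ Theorem~\ref{main2}). The genuine gap is the residual case of the second bullet, where $G$ contains a triangle but no cycle of length $7$, $8$, $9$ or $10$. The paper dispatches this in one line by citing Theorem~\ref{DP3}: taking $(a,b)=(5,7)$ there, a planar graph with no cycles of lengths $4,5,7,9$ is DP-$3$-colorable, and that hypothesis holds in the residual case. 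You instead propose to kill this case from scratch by discharging, and the sketch as given cannot work.

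Two concrete problems. First, with charges $d(v)-4$ on vertices and $d(f)-4$ on faces, every $3$-vertex carries charge $-1$, and your rules move charge only onto triangular faces, never onto vertices; after your degree-$\le 2$ reduction the graph can be close to cubic, in which case the vertices carry total charge near $-n$ while the faces have surplus only about $n-8$, so ``every element ends with non-negative charge'' is unattainable. Second, and more fundamentally, a discharging contradiction is available only when the structural properties established for a minimal counterexample are collectively unsatisfiable, and the only reducible configuration you have is a vertex of degree at most $2$. But planar graphs with $\delta\ge 3$, $d^{\Delta}\ge 2$ and no cycles of length $4$ through $10$ do exist (planar cubic graphs of girth at least $11$, for instance), and they are perfectly DP-$3$-colorable; your argument has no mechanism that distinguishes a minimal counterexample from such a graph, so no choice of rules can force a negative total only in the former. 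What the residual case actually requires is a proof that this class of graphs is DP-$3$-colorable, which needs further reducible configurations and is essentially the content of Theorem~\ref{DP3}. The repair is simply to invoke that theorem, as the paper does; with that substitution your argument is complete and coincides with the paper's proof.
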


\begin{proof}
Let $G$ be a planar graph under consideration. Note that $G$ is DP-3-colorable if $G$ contains no $3$-cycle. So we may assume that $G$ contains a $3$-cycle.  Then by Theorem~\ref{main0}, $G$ is DP-3-colorable when $d^{\Delta}\ge 3$. So we let $d^{\Delta}\ge 2$ and assume that $G$ contains no 
$\{4,5,6\}$-cycles.  By Theorem~\ref{DP3}, we may assume that $G$ contains a cycle of length in $\{7, 8, 9\}$. Now by Theorem~\ref{main2}, $G$ is DP-3-colorable.
\end{proof}

We use discharging method to prove the results.  One part of the proofs is to show some structures to be {\em reducible}, that is, a coloring outside of the structure can be extended to the whole graph.  The following lemma from \cite{LLYY18} provides a powerful tool to prove the reducibility.


\begin{lemma}\label{near-2-degenerate}\cite{LLYY18}
Let $k \ge 3$ and $H$ be a subgraph of $G$. If the vertices of $H$ can be ordered as $v_1, v_2, \ldots, v_{\ell}$ such that the following hold
\begin{itemize}
\item[(1)] $v_1v_{\ell}\in E(G)$,
and $v_1$ has no neighbor outside of $H$,
\item[(2)] $d(v_{\ell})\le k$ and $v_{\ell}$ has at least one neighbor in $G-H$,
\item[(3)] for each $2\le i\le \ell-1$, $v_i$ has at most $k-1$ neighbors in $G[\{v_1, \ldots, v_{i-1}\}]\cup (G-H)$,
\end{itemize}
then a DP-$k$-coloring of $G-H$ can be extended to a DP-$k$-coloring of $G$.
\end{lemma}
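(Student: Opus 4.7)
The plan is to extend the given DP-$k$-coloring $\phi$ of $G-H$ to all of $G$ by coloring the vertices of $H$ in the order $v_2, v_3, \ldots, v_{\ell-1}$, then $v_\ell$, and finally $v_1$, with a bit of coordination at the end. In the greedy phase on $v_2, \ldots, v_{\ell-1}$, when $v_i$ is to be colored its already-colored neighbors lie in $\{v_2,\ldots,v_{i-1}\}\cup V(G-H)\subseteq \{v_1,\ldots,v_{i-1}\}\cup V(G-H)$, and condition~(3) bounds this by $k-1$. In the DP-setting each colored neighbor forbids at most one color of $L(v_i)$ through its matching; since $|L(v_i)|=k$, at least one color survives and is taken as $c(v_i)$. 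I would record a refinement for later use: whenever $v_1\in N(v_i)$, the vertex $v_1$ is still uncolored, so $v_i$ has at most $k-2$ colored neighbors and hence at least two admissible choices.

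Next, color $v_\ell$. By condition~(1) the edge $v_1 v_\ell$ is present, so $v_1$ is an uncolored neighbor of $v_\ell$, while $v_\ell$'s other $H$-neighbors lie in $\{v_2,\ldots,v_{\ell-1}\}$ and are colored, as are its $(G-H)$-neighbors (by $\phi$). Since $d(v_\ell)\le k$ by condition~(2), the number of colored neighbors of $v_\ell$ is at most $d(v_\ell)-1\le k-1$, and some color $b\in L(v_\ell)$ survives; we set $c(v_\ell)=b$. Finally, to color $v_1$, condition~(1) guarantees that $N(v_1)\subseteq V(H)$ and all of its neighbors are now colored, so at most $d(v_1)$ colors of $L(v_1)$ are forbidden through the matchings $M_{v_jv_1}$.

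The step I expect to be the main obstacle is the tight scenario in which the $d(v_1)$ matching constraints could a priori kill every color of $L(v_1)$; this forces the two remaining choices to be made in a coordinated fashion rather than independently. The key observation is that the edge $v_1v_\ell$ together with the slack recorded during the greedy phase gives us enough maneuvering room: for each $v_i\in N(v_1)\cap\{v_2,\ldots,v_{\ell-1}\}$ the greedy step offered at least two admissible colors, and switching $c(v_i)$ replaces the color of $L(v_1)$ forbidden through $M_{v_iv_1}$ by a different one; similarly, varying $c(v_\ell)$ within its surviving list shifts the color of $L(v_1)$ forbidden through $M_{v_1v_\ell}$. Coordinating these swaps so that the union of forbidden colors at $v_1$ misses at least one color of $L(v_1)$ is the technical heart of the argument; once such a surviving color is found and assigned to $v_1$, every vertex of $H$ is consistently colored and the DP-$k$-coloring of $G$ is complete.
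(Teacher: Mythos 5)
The paper does not prove this lemma itself (it is quoted from \cite{LLYY18}), but your proposal has a genuine gap precisely where you flag "the technical heart of the argument": you never actually carry out the coordination step that is supposed to rescue $v_1$, and as sketched it does not go through. The difficulty is that the "at least two admissible choices" you record for each $v_i\in N(v_1)$ during the first greedy pass are contingent on the colors already assigned to $v_2,\dots,v_{i-1}$: once you start swapping $c(v_j)$ for some $j<i$ to protect a target color at $v_1$, the admissible set at $v_i$ (and at $v_\ell$) changes, so those recorded options are not simultaneously and independently available. Worse, $v_\ell$ is only guaranteed a single admissible color, and \emph{which} color of $L(v_1)$ it forbids through $M_{v_1v_\ell}$ depends on the choices made at the earlier vertices; so you cannot first fix a target color for $v_1$ based on $v_\ell$'s choice and then re-choose the earlier vertices, because that re-choosing may change $v_\ell$'s choice. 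Since condition (3) places no bound on $d(v_1)$, the union of forbidden colors at $v_1$ really can cover all of $L(v_1)$ unless this circularity is broken, and your write-up does not break it.

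The standard (and intended) argument inverts your order and avoids the repair step entirely: color $v_1$ \emph{first}, choosing $c_1\in L(v_1)$ so that the vertex $(v_1,c_1)$ is either unmatched in $M_{v_1v_\ell}$ or matched to a color of $L(v_\ell)$ that is already forbidden by the $\phi$-colored neighbors of $v_\ell$ in $G-H$. A count shows this is possible: if $F\subseteq L(v_\ell)$ is the set of colors killed by $N(v_\ell)\cap(G-H)$, then at most $k-|F|$ choices of $c_1$ are "bad", so at least $|F|$ are good; and if $F=\emptyset$ no care is needed, since then $v_\ell$'s at most $d(v_\ell)-1\le k-1$ neighbors in $H$ are the only ones forbidding anything. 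With $v_1$ colored, condition (3) lets you color $v_2,\dots,v_{\ell-1}$ greedily (each sees at most $k-1$ colored neighbors), and finally $v_\ell$ has at most $k-1$ distinct forbidden colors because $v_1$ contributes nothing new beyond $F$. Your refinement that each $v_i\in N(v_1)$ has two choices is the right instinct, but the clean way to exploit the edge $v_1v_\ell$ is to commit to $v_1$'s color up front rather than to attempt a posteriori swaps.
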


We end the introduction with some notations used in the paper. All graphs mentioned in this paper are simple.  A $k$-vertex ($k^+$-vertex, $k^-$-vertex) is a vertex of degree $k$ (at least $k$, at most $k$). The same notation will be applied to faces and cycles. We use $V(G)$ and $F(G)$ to denote the set of vertices and faces in $G$, respectively. An $(\ell_1, \ell_2)$-edge is an edge $e=v_1v_2$ with $d(v_i)=\ell_i$. An $(\ell_1, \ell_2, \ldots, \ell_k)$-face is a $k$-face $f=[v_1v_2\ldots v_k]$ with $d(v_i)=\ell_i$, respectively.  Recall that two faces are {\em adjacent} if they share a common edge, and are {\em intersecting} if they share a common vertex.  A vertex is {\em incident} to a face if it is on the face, and is {\em adjacent} to a face if it is not on the face but adjacent to a vertex on the face.  A vertex in $G$ is \emph{light} if it is incident to a $3$-face.  If $C$ is a cycle in an embedding of $G$, we use $int(C)$ and $ext(C)$ to denote the sets of vertices located inside and outside a cycle $C$, respectively. The cycle $C$ is called a {\em separating cycle} if $int(C)\ne\emptyset\ne ext(C)$.
An edge $uv\in E(G)$ is {\em straight}  if every $(u,c_1)(v,c_2)\in M_{uv}$ satisfies $c_1=c_2$. We note that if all edges in a subgraph are straight, then a DP-$3$-coloring on the subgraph is the same as a proper $3$-coloring.

\section{Proof of Theorem~\ref{main0}}

Let $(G,C_0)$ be a counterexample to Theorem ~\ref{main0} with minimum number of vertices, where $C_0$ is a $3$-,$6$-,$7$-,$8$-cycle or a good $9$-cycle. Below  we let $G$ be a plane graph. The following was shown in \cite{LLYY18} for every non-DP-3-colorable graphs.

\begin{lem}\label{minimum1}
For each $v\in G-C_0$, $d(v)\ge 3$ and for each $v\in C_0$, $d(v)\ge 2$.
\end{lem}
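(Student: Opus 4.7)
The second assertion is immediate: if $v\in V(C_0)$, then $v$ has two neighbors along $C_0$, so $d_G(v)\ge 2$ automatically. The content of the lemma is therefore the first assertion, and I would prove it by the standard minimality argument.

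Suppose, for contradiction, that some $v\in V(G)\setminus V(C_0)$ has $d_G(v)\le 2$, and set $G':=G-v$. Since $v\notin V(C_0)$, the cycle $C_0$ survives as a subgraph of $G'$ of the same length and type (in particular, still a $3$-, $6$-, $7$-, $8$-cycle or a good $9$-cycle). Deleting a vertex cannot create new cycles, so $G'$ still has no $\{4,5\}$-cycles. Moreover, every triangle of $G'$ is a triangle of $G$, and every $G'$-path between two such triangles is also a $G$-path, so $d^{\Delta}(G')\ge d^{\Delta}(G)\ge 3$. Thus $(G',C_0)$ satisfies all the hypotheses of Theorem~\ref{main0} and has strictly fewer vertices than $(G,C_0)$.

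By the minimality of $(G,C_0)$ as a counterexample, the prescribed DP-$3$-coloring of $C_0$ extends to a DP-$3$-coloring $\phi$ of $G'$. To reach a contradiction it suffices to extend $\phi$ to $v$. For each neighbor $u$ of $v$ in $G$, the set $M_{uv}$ is a matching between $L_u$ and $L_v$, so the already-chosen element $(u,\phi(u))$ is matched to at most one element of $L_v$; hence $u$ forbids at most one color in $L(v)$. Because $d_G(v)\le 2$ while $|L(v)|\ge 3$, at least one color of $L(v)$ remains available, yielding an $\mathcal{M}_L$-coloring of $G$ that extends the given coloring of $C_0$ and contradicts the choice of $(G,C_0)$.

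The only point requiring any genuine checking is that passing from $G$ to $G'=G-v$ preserves every hypothesis of Theorem~\ref{main0}, in particular the triangle-distance condition $d^{\Delta}\ge 3$; once that is in place, the extension step is just the routine "color $v$ last" argument for DP-coloring. I do not expect any real obstacle here, as this is the typical low-degree reducibility lemma serving as the base of the discharging argument to follow.
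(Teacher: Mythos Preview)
Your argument is correct and is exactly the standard minimality-plus-greedy-extension proof; the paper itself does not spell out a proof here but simply cites \cite{LLYY18}, and the analogous Lemma~\ref{minimum} in Section~3 is proved by the same one-line ``color $v$ last'' argument you give. Your extra verification that $G-v$ still satisfies the hypotheses of Theorem~\ref{main0} (no $\{4,5\}$-cycles, $d^{\Delta}\ge 3$, $C_0$ unchanged and still good) is routine and sound.
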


\begin{lem}\label{SC}
There exist no separating $\{3,6,7,8\}$-cycles or good $9$-cycle. 
\end{lem}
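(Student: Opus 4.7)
The plan is to argue by contradiction using the minimality of the counterexample $(G,C_0)$. Suppose $G$ contains a separating cycle $C$ of one of the listed types. Fix an embedding of $G$ so that $C_0$ bounds the outer face; then $V(C_0)\subseteq V(C)\cup ext(C)$.

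Let $G_1$ be the subgraph of $G$ obtained by deleting every vertex in $int(C)$, and let $G_2$ be the subgraph obtained by deleting every vertex in $ext(C)$. Since $C$ is separating, both $int(C)$ and $ext(C)$ are nonempty, so $G_1$ and $G_2$ are both proper subgraphs of $G$. Each is planar, contains no $\{4,5\}$-cycle, and satisfies $d^{\Delta}\ge 3$, because all three properties are inherited by subgraphs (in particular, distances between the remaining triangles can only increase when vertices are deleted).

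First I would apply minimality of $(G,C_0)$ to $(G_1,C_0)$: since $C_0\subseteq G_1$ and $|V(G_1)|<|V(G)|$, the given DP-$3$-coloring of $C_0$ extends to a DP-$3$-coloring $\phi_1$ of $G_1$, which in particular induces a DP-$3$-coloring of $C$. Next I would apply minimality to $(G_2,C)$, viewing $C$ itself as the pre-colored cycle. This requires that $C$ be an admissible pre-colored cycle in $G_2$: its length is one of the allowed values, and when $|C|=9$ we need $C$ to remain good \emph{in $G_2$}. Here the point is that ``bad'' means containing one of the configurations in Figure~\ref{bad-9} with $C$ as the outer $9$-cycle; any such configuration present in $G_2$ would also sit in $G\supseteq G_2$, so goodness of $C$ in $G$ forces goodness of $C$ in $G_2$. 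Hence $\phi_1|_C$ extends to a DP-$3$-coloring $\phi_2$ of $G_2$. Because $\phi_1$ and $\phi_2$ agree on $C$, gluing them produces a DP-$3$-coloring of $G$ extending the pre-coloring of $C_0$, contradicting the assumption that $(G,C_0)$ is a counterexample.

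I expect the only real obstacle to be the bookkeeping around the definitions: one must verify that ``no $\{4,5\}$-cycle,'' ``$d^{\Delta}\ge 3$,'' and ``$C$ is a good $9$-cycle'' are all inherited when we pass to $G_1$ or $G_2$, and that the two partial colorings glue consistently along $C$. Once those observations are recorded, the argument reduces to two invocations of the minimality of $(G,C_0)$, and no discharging or reducibility input is needed here.
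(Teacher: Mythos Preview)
Your argument is correct in substance and follows the same route as the paper: extend the precoloring of $C_0$ to $G-\mathrm{int}(C)$, then extend the induced coloring of $C$ to $G-\mathrm{ext}(C)$, and glue. There is, however, a small circularity in your first step. You write ``Fix an embedding of $G$ so that $C_0$ bounds the outer face,'' but this is possible only if $C_0$ is itself not a separating cycle --- and since $C_0$ is one of the listed cycle types, that is part of what the lemma claims. The paper handles this by first running the very same two-step extension with $C=C_0$ (in an arbitrary embedding), which shows $C_0$ is not separating; only then does it place $C_0$ on the outer face and treat the remaining cycles $C\neq C_0$ exactly as you do. Once you insert that preliminary case (your own argument already covers it, since $V(C_0)\subseteq V(C)\cup\mathrm{ext}(C)$ is trivial when $C=C_0$), the proof is complete. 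Your verification that a good $9$-cycle $C$ remains good in $G_2$ is actually more explicit than the paper's, which leaves that point implicit.
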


\begin{proof}
First of all, we note that $C_0$ cannot be a separating cycle. For otherwise, we may extend the coloring of $C_0$ to both inside $C_0$ and outside $C_0$, respectively, then combine them to get a coloring of $G$. So we may assume that $C_0$ is the outer face of the embedding of $G$.

Let $C\not=C_0$ be a separating $\{3,6,7,8\}$-cycle or good $9$-cycle in $G$.  By the minimality of $G$, the coloring of $C_0$ can be extended to $G-int(C)$. Now that $C$ is colored, thus by the minimality of $G$ again, the coloring of $C$ can be extended to $int(C)$. Combine inside and outside of $C$, we have a coloring of $G$, which is extended from the coloring of $C_0$, a contradiction.
\end{proof}

By Lemma~\ref{SC}, if $C$ is a bad $9$-cycle, then the subgraph in Figure~\ref{bad-9} that contains $C$ must be induced.  From now on, we will let $C_0$ be the outer face of $G$. Likewise, if $C_0$ contains a chord, then by Lemma~\ref{SC}, $G$ contains no other vertices, so the coloring on $C_0$ is also a coloring of $G$. Therefore, we may assume that $C_0$ is chordless as well.   A vertex is {\em internal} if it is not on $C_0$ and a face is {\em internal} if it contains no vertex of $C_0$.

For convenience, a $6^+$-face $f$ in $G$ is \emph{bad} if $d(f)=6$ and $f$ is adjacent to a $3$-face, otherwise, it is {\em good}. Let $f$ be a (3,3,3,3,3,3)-face adjacent to a $3$-face $f'$. We call the vertex $v$ on $f'$ but not on $f$ the {\em roof} of $f$, and $f$ the {\em base} of $v$.

\begin{lem}\label{big structure Lemma}
Let $f$ be an internal $6$-face in $G$ and $f_1$ be an internal $(3,3,4)$-face adjacent to $f$.  Then each of the followings holds:
\begin{enumerate}
\item[(a)] $f$ cannot contain vertices of another $3$-face;
\item[(b)] If $f$ is a $(3,3,3,3,3,4)$-face such that $f$ and $f_1$ share a common $(3,4)$-edge, then the other $(3,4)$-edge of $f_1$ cannot be on another internal $(3,3,3,3,3,4)$-face.
\item[(c)] If $f$ is a $(3,3,3,3,3,3)$-face, then $f_1$ cannot be adjacent to an internal $(3,3,3,3,3,4)$-face. This means a $4$-vertex on an internal $(3,3,3,3,3,4)$-face cannot be a roof.
\end{enumerate}
\end{lem}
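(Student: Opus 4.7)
The plan is to prove (a) by a distance count and to handle (b) and (c) via the reducibility tool of Lemma~\ref{near-2-degenerate}.

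For (a), suppose for contradiction that $f$ contains a vertex $v$ of some $3$-face $f_2\ne f_1$. If $v\in V(f_1)$, then $f_1$ and $f_2$ share $v$, so $d^{\Delta}=0$, contradicting $d^{\Delta}\ge 3$. Otherwise, writing $f=[v_1v_2v_3v_4v_5v_6]$ with $V(f)\cap V(f_1)=\{v_1,v_2\}$, each of the four remaining vertices $v_3,v_4,v_5,v_6$ lies at boundary distance at most $2$ from $\{v_1,v_2\}$ along $f$, hence at graph distance at most $2$ in $G$, again contradicting $d^{\Delta}\ge 3$.

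For (b), I would fix the forced configuration: $f=[v_1v_2v_3v_4v_5v_6]$ with $d(v_4)=4$, $f_1=[v_3v_4u]$ sharing the $(3,4)$-edge $v_3v_4$ with $f$, and (for contradiction) $f_2=[v_4uw_1w_2w_3w_4]$ sharing the other $(3,4)$-edge $v_4u$ of $f_1$. Degree counting forces the four neighbors of $v_4$ to be exactly $\{v_3,v_5,u,w_4\}$; a short check using $d^{\Delta}\ge 3$, the absence of $\{4,5\}$-cycles, and part~(a) rules out any identifications among the $11$ listed vertices. Setting $H=\{v_1,\ldots,v_6,u,w_1,w_2,w_3,w_4\}$, the vertices $v_3,v_4,u$ have all their neighbors in $H$ while every other vertex of $H$ has exactly one neighbor outside, and I claim the ordering
\[
v_4,\,w_4,\,w_3,\,w_2,\,w_1,\,u,\,v_3,\,v_2,\,v_1,\,v_6,\,v_5
\]
satisfies the hypotheses of Lemma~\ref{near-2-degenerate}: $v_4v_5\in E(G)$, $v_5$ is a $3$-vertex with a neighbor in $G-H$, and a routine check shows that every middle vertex has at most two neighbors that are either earlier in the sequence or in $G-H$. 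Minimality of $(G,C_0)$ yields a coloring of $G-H$ extending $C_0$, which Lemma~\ref{near-2-degenerate} then lifts to $G$, the desired contradiction.

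For (c) I would proceed analogously: write $f=[a_1\cdots a_6]$ with all degrees $3$, $f_1=[a_1a_2v]$ with $v$ the degree-$4$ roof, and (for contradiction) let $f_2=[va_1a_6b_1b_2x]$ be an internal $(3,3,3,3,3,4)$-face sharing the $(3,4)$-edge $va_1$ with $f_1$; the $4$-vertex of $f_2$ is then forced to be $v$, with a unique outside neighbor $y$. With $H=\{a_1,\ldots,a_6,v,b_1,b_2,x\}$ the ordering
\[
a_2,\,v,\,x,\,b_2,\,b_1,\,a_1,\,a_6,\,a_5,\,a_4,\,a_3
\]
meets the hypotheses of Lemma~\ref{near-2-degenerate} (with $a_2a_3\in E(G)$ and $a_3$ a $3$-vertex with a neighbor outside $H$), and a middle-vertex check finishes the contradiction.

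The main obstacle is pinning down the orderings, especially in case (c). The roof $v$ has three of its four neighbors $\{a_1,a_2,x\}$ trapped inside $H$, forcing $v$ to appear very early so that at least two of these neighbors come after it; simultaneously $a_1$ is a $3$-vertex with all its neighbors $\{a_2,a_6,v\}$ in $H$, so it must avoid seeing all three among its predecessors. This leaves little flexibility and essentially only a few orderings work. One must also verify that potential identifications among the listed vertices (e.g.\ $w_4=v_5$ in~(b), or $b_i$ coinciding with boundary vertices in~(c)) are forbidden by $d^{\Delta}\ge 3$ and the cycle-length restrictions, and that the restatement of (c) as ``a $4$-vertex on a $(3,3,3,3,3,4)$-face is not a roof'' also covers the corner case when $f_1$ and $f_2$ meet only at $v$ rather than sharing an edge, which may require a brief separate argument ruling out that configuration via the other faces forced around $v$.
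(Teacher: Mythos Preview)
Your proposal is correct and mirrors the paper's proof: part (a) is dispatched by the triangle-distance hypothesis, and parts (b) and (c) are handled by invoking Lemma~\ref{near-2-degenerate} on the full configuration $H$; after translating your labels into the paper's (your $v_4,u,w_i,v_3,\ldots\leftrightarrow y,z,v_i,x,\ldots$ in (b), and your $a_2,v,x,b_2,b_1,a_1,a_6,\ldots\leftrightarrow x,z,v_1,v_2,v_3,y,u_1,\ldots$ in (c)), your two orderings coincide exactly with the paper's. Your observation that the restatement of (c) formally needs the corner case where $f_1$ and $f_2$ meet only at $v$ is more careful than the paper, which silently assumes adjacency.
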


\begin{proof}
(a) follows from the condition on the distance of triangles.  To show (b) and (c), let $f_1=[xyz]$ so that $xy$ is the common edge of $f_1$ and $f=[xyu_1u_2u_3u_4]$ and $d(x)\le d(y)$. Let $f_2=[zv_1v_2v_3v_4y]$ be the $(3,3,3,3,4)$-face adjacent to $f_1$.

(b) We have $d(y)=4$ and $d(x)=d(z)=d(u_i)=d(v_i)=3$ for $i\in [4]$. Order the vertices on $f$ and $f_2$ as
$$y,\ v_4,\ v_3,\ v_2,\ v_1,\ z,\ x,\ u_4,\ u_3,\ u_2,\ u_1.$$
Let $S$ be the set of vertices in the list. By Lemma ~\ref{near-2-degenerate}, a DP-$3$-coloring of $(G-S,C_0)$ can be extended to $(G,C_0)$, a contradiction.

(c) We have $d(z)=4$ and $d(x)=d(y)=d(u_i)=d(v_i)=3$ for $i\in [4]$, and $u_1=v_4$. Order the vertices on $f$ and $f_2$ as
$$x, \ z,\ v_1,\ v_2,\ v_3,\ y,\ u_1,\  u_2,\ u_3,\ u_4.$$
Let $S$ be the set of vertices in the list. By Lemma ~\ref{near-2-degenerate}, a DP-$3$-coloring of $(G-S,C_0)$ can be extended to $G$, a contradiction.
\end{proof}

\begin{lem}\label{bad-6}
Let $f=[v_1v_2v_3v_4v_5v_6]$ be an internal $6$-face that is adjacent to an internal $(3,3,3)$-face $f_1=[v_1v_2v_{12}]$, then $d(v_3)\geq4$ or $d(v_6)\geq4$.
\end{lem}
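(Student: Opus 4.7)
I would argue by contradiction: assume $d(v_3)=d(v_6)=3$. Since $f_1=[v_1v_2v_{12}]$ is a $(3,3,3)$-face we also have $d(v_1)=d(v_2)=d(v_{12})=3$, so every vertex of $V(f_1)\cup\{v_3,v_6\}$ has degree exactly $3$. The plan is to invoke Lemma~\ref{near-2-degenerate} on a suitable set $H\supseteq V(f)\cup\{v_{12}\}$: since $G-H$ still contains the precolored $C_0$ and is a smaller planar graph satisfying the hypotheses, the minimality of the counterexample gives a DP-$3$-coloring of $G-H$, and extending it to $G$ via the lemma contradicts minimality.

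I would take $H=V(f)\cup\{v_{12}\}$ and try the ordering
\[
u_1=v_1,\ u_2=v_6,\ u_3=v_5,\ u_4=v_4,\ u_5=v_3,\ u_6=v_2,\ u_7=v_{12}.
\]
Condition (1) is immediate since $N(v_1)=\{v_2,v_6,v_{12}\}\subseteq H$ and $v_1v_{12}\in E(G)$. For condition (2), $d(v_{12})=3$; the third neighbor of $v_{12}$ (beyond $v_1,v_2$) cannot lie on $V(f)\setminus\{v_1,v_2\}$, since any such adjacency would create either a $4$- or $5$-cycle or place a second triangle within distance less than $3$ of $f_1$, both forbidden. So $v_{12}$ has a neighbor in $G-H$.

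The bulk of the verification is condition (3). At $u_2=v_6$ the only earlier $H$-neighbor is $v_1$ and the only neighbor in $G-H$ is the third neighbor of $v_6$; at $u_5=v_3$ the only earlier $H$-neighbor is $v_4$ and the only neighbor in $G-H$ is the third neighbor of $v_3$; at $u_6=v_2$ the earlier $H$-neighbors are $v_1$ and $v_3$, while $N(v_2)\cap(G-H)=\emptyset$. Each of these counts is exactly $2$, using that the ``third neighbors'' of $v_3,v_6,v_{12}$ are forced outside $H$ by $d^\Delta\ge 3$ and the $\{4,5\}$-cycle exclusion. At $u_3=v_5$ and $u_4=v_4$ each vertex has exactly one earlier $H$-neighbor along this ordering, so the count is $1+(d(v_i)-2)=d(v_i)-1$, which meets condition (3) precisely when $d(v_4),d(v_5)\le 3$. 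In that subcase Lemma~\ref{near-2-degenerate} immediately yields the contradiction.

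\textbf{Main obstacle.} The technical heart of the argument is the remaining subcase $d(v_4)\ge 4$ or $d(v_5)\ge 4$, where the naive ordering above breaks condition (3). To handle it I would enlarge $H$ to absorb the offending external neighbors (for example by adjoining the third neighbor of $v_{12}$, of $v_3$, or of $v_6$, all of which are distinct from every vertex of $V(f)\cup\{v_{12}\}$ by the same cycle and distance restrictions used above) and re-order the enlarged $H$ so that the vertex of higher degree is placed before both of its $H$-neighbors on $f$. The cycle-length exclusions and $d^\Delta\ge 3$ keep the new vertices from contributing further $H$-edges to the critical positions, so condition (3) can be re-established. Carrying out this case analysis cleanly — rather than finding a single uniform ordering — is the main obstacle I expect in turning this sketch into a complete proof.
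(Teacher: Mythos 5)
Your reduction via Lemma~\ref{near-2-degenerate} with $H=V(f)\cup\{v_{12}\}$ is verified correctly as far as it goes, but it only covers the subcase $d(v_4),d(v_5)\le 3$, and the remaining case is not a technicality you can patch by ``enlarging $H$'' --- it is the actual content of the lemma. The degrees of $v_4$ and $v_5$ are unbounded, and condition (3) of Lemma~\ref{near-2-degenerate} forces any vertex placed at an interior position of the ordering to have all but at most $2$ of its neighbors placed \emph{later and inside} $H$. So absorbing a vertex $v_4$ with $d(v_4)=d$ requires adjoining at least $d-2$ of its neighbors after it, each of which then imposes the same requirement on its own neighbors; there is no structural information about these vertices (they need not be degree-$3$, need not avoid $C_0$, etc.) that would let this cascade terminate. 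The proposed fix therefore does not yield a proof.

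The paper's proof avoids the problem by never putting $v_4$ or $v_5$ into the deleted set. It deletes only the five degree-$3$ vertices $\{v_{12},v_1,v_2,v_3,v_6\}$ and, crucially, \emph{identifies} $v_4$ with the third neighbor $v$ of $v_{12}$ (after straightening the edges $v_1v_2,\ vv_{12},\ v_{12}v_2,\ v_2v_3,\ v_3v_4$). Lemma~\ref{SC} guarantees the identification creates no cycle of length $3$ to $5$ and no chord of $C_0$, so minimality colors the identified graph; then $v_4$ and $v$ receive a common color, $v_3$ is colored properly, and $v_{12}$ is given the color of $v_3$ --- legal at $v$ because that color differs from the common color of $v_4$ and $v$, and designed so that the straight edges $v_{12}v_2$ and $v_2v_3$ impose the \emph{same} forbidden color on $v_2$, leaving $v_2$ (and then $v_6,v_1$) colorable. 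This identification step is exactly the DP-coloring technique the introduction flags as the point of the paper, and it is the idea missing from your proposal.
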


\begin{proof}
We assume that $d(v_3)=d(v_6)=3$, and use $v$ to denote the neighbor of $v_{12}$ other than $v_1,v_2$. First we may rename the lists of vertices in $\{v,v_{12},v_2,v_3,v_4\}$ so that each edge in $\{v_1v_2, vv_{12}, v_{12}v_2, v_2v_3, v_3v_4\}$ is straight.

Consider the graph $G'$ obtained from $G-\{v_{12},v_1, v_2, v_3, v_6\}$ by identifying $v_4$ and $v$.  We claim that no new cycles of length from $3$ to $5$ or multiple edges are created, for otherwise, there is a path of length $2, 3, 4$ or $5$ from $v$ to $v_4$ in $G-\{v_{12},v_1, v_2, v_3,v_6\}$, which together with $v_{12}, v_1, v_2, v_3$ forms a separating $\{6，7，8\}$-cycle or good $9$-cycle, a contradiction to Lemma~\ref{SC}. Clearly, $d^\Delta(G')\ge3$. Finally, we claim that no new chord in $C_0$ is formed in $G'$, for otherwise, $v\in C_0$ and $v_4$ is adjacent to a vertex on $C_0$, then there is a path between $v_4$ and $v$ on $C_0$ with length at most four, which with $v_3v_2v_{12}$ forms a separating $\{6,7,8\}$-cycle  or good $9$-cycle.

By minimality of $(G,C_0)$, the $DP$-$3$-coloring of $C_0$ can be extended to a $DP$-$3$-coloring $\phi$ of $G'$.  Now keep the colors of all vertices in $G'$ and color $v_4$ and $v$ with the color of the identified vertex. Now properly color $v_3$, and then color $v_{12}$ with the color of $v_3$, which we can do it because the edges $vv_{12}, v_{12}v_2, v_2v_3, v_3v_4$ are straight and the color of $v_3$ is different from the one on $v_4$ and $v$.  Now color $v_6, v_1, v_2$ properly in the order, we obtain a coloring of $G$, a contradiction. 
\end{proof}

\begin{lem}\label{2bad-6}
Let $P=xu_1u_2yv_1v_2z$ be a path in $int(C_0)$ and $f=[x'y'z']$ be an internal $(3,3,3)$-face so that $xx', yy', zz'\in E(G)$.  If $d(x)=d(u_1)=d(u_2)=3$, then $d(y)\ge 5$. (And similarly, if $d(z)=d(v_1)=d(v_2)=3$, then $d(y)\ge 5$.)
\end{lem}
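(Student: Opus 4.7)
The plan is to suppose $d(y)\le 4$ and exhibit a reducible configuration, giving a contradiction. Write $a$, $b$, $c$ for the third neighbors of $x$, $u_1$, $u_2$ off the path $P$, and (when $d(y)=4$) write $y^*$ for the fourth neighbor of $y$. A preliminary case check using the absence of $\{4,5\}$-cycles and $d^{\Delta}\ge 3$ shows that $\{a,b,c\}$ are pairwise distinct and pairwise non-adjacent, are disjoint from $\{x',y',z'\}$, and (when $d(y)=4$) that $y^*$ is non-adjacent to every vertex of $\{x,u_1,u_2,v_1,v_2,z,x',y',z'\}$; each such extra edge would produce a forbidden short cycle or place a new triangle within distance less than $3$ of $f$.

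For $d(y)=3$ I apply Lemma~\ref{near-2-degenerate} with $k=3$ to $H=\{x,u_1,u_2,y,x',y',z'\}$ under the ordering
\[
v_1=x',\ v_2=z',\ v_3=y',\ v_4=y,\ v_5=u_2,\ v_6=u_1,\ v_7=x.
\]
Condition (1) holds because $x'$ has all three neighbors $\{x,y',z'\}$ inside $H$; condition (2) holds since $d(x)=3$, $a\in G-H$, and $xx'\in E$; condition (3) is a direct count showing that each of $z',y',y,u_2,u_1$ has exactly two neighbors in $\{v_1,\dots,v_{i-1}\}\cup(G-H)$ --- for instance, $y$ sees the preceding $y'=v_3$ and the external $v_1$. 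By the minimality of $(G,C_0)$ the DP-$3$-coloring of $C_0$ extends to $G-H$, and Lemma~\ref{near-2-degenerate} extends it to $G$, a contradiction.

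For $d(y)=4$ the above ordering fails at $v_4=y$, which has the extra external neighbor $y^*$ inflating its count to three, so I would follow the identification strategy of Lemma~\ref{bad-6}. After straightening the DP-lists along a suitable path inside the structure, I would delete $\{x,u_1,u_2,x',y',z'\}$ and identify a carefully chosen pair of surviving vertices (the analogue of identifying $v_4$ and $v$ in Lemma~\ref{bad-6}, namely a pair sitting on opposite sides of the removed structure). Any path of length at most $5$ between the identified pair in the residual graph, combined with a path through the removed vertices, would produce a separating cycle of length in $\{6,7,8\}$ or a good $9$-cycle in $G$, contradicting Lemma~\ref{SC}; hence the identified graph is simple, has no $\{4,5\}$-cycles, satisfies $d^{\Delta}\ge 3$, and introduces no chord of $C_0$. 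Minimality then yields a DP-$3$-coloring, which lifts back to $G$ by assigning the identified vertex's color to its two preimages using the straight edges and then extending to the six deleted vertices in a near-$2$-degenerate order.

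The main obstacle is the case $d(y)=4$: choosing the identified pair is delicate, as they must be non-adjacent, sufficiently far apart in the residual graph so that no new $\{3,4,5\}$-cycle arises (or so that any shorter separation is routed through a cycle forbidden by Lemma~\ref{SC}), and compatible with the color transfer across the straight edges. The nonadjacency statements for $\{a,b,c,y^*\}$ collected at the outset do most of the work, but a short case analysis on the location of $y^*$ relative to the second half of $P$ may still be needed.
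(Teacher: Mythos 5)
Your treatment of the case $d(y)=3$ is correct and is a legitimate alternative to the paper's route (the paper instead observes via Lemma~\ref{SC} that $xu_1u_2yy'x'$ must bound a face and then invokes Lemma~\ref{bad-6} to force $d(y)\ge 4$); your ordering $x',z',y',y,u_2,u_1,x$ does satisfy the hypotheses of Lemma~\ref{near-2-degenerate} once one checks, as you indicate, that the third neighbors of $x,u_1,u_2$ lie outside $H$. However, the case $d(y)=4$ is the heart of the lemma, and your proposal does not actually prove it: you never say which pair of vertices is to be identified, which edges are straightened, or how the deleted vertices are recolored, and those choices are exactly where the difficulty lies. The paper deletes the seven vertices $\{x,u_1,u_2,y,y',x',z'\}$ (note that $y$ is deleted, unlike in your proposed set) and identifies $z$ with the fourth neighbor $y''$ of $y$, after straightening the chain $y''y$, $yy'$, $y'z'$, $z'z$. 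The point of this specific choice is the lifting step: once $z$ and $y''$ receive the common color of the identified vertex, one colors $y$ properly and then colors $z'$ \emph{with the color of $y$}. The straight chain makes this legal at $z'$ (its colored neighbor $z$ carries the color of $y''$, which differs from the color of $y$), and it forces the two colored neighbors $y$ and $z'$ of $y'$ to impose the \emph{same} forbidden color, so that the triangle $x'y'z'$ --- whose vertices each see one colored external neighbor $x$, $y$, $z$ and would otherwise form a triangle of $2$-lists, not always colorable --- can be completed after $u_2,u_1,x$ are colored.

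Without nominating the identified pair and exhibiting this coincidence, the reduction does not go through; in particular, with your deletion set (which keeps $y$ in the residual graph) the final stage leaves $x'$, $y'$, $z'$ as a triangle in which each vertex has exactly two available colors and no repetition is forced, and such a configuration can fail to be colorable. So the $d(y)=4$ case, which you yourself flag as the main obstacle, is a genuine gap rather than a routine verification.
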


\begin{proof}
Assume that $d(y)\le 4$. Since there is no separating $6$-cycles by Lemma~\ref{SC}, the $6$-cycles $xu_1u_2yy'x'$ and $yv_1v_2zz'y'$ are both $6$-faces. Then by Lemma~\ref{bad-6}, $d(y)=4$.  Let $y''$ be the fourth neighbor of $y$. We may rename the lists of vertices in $\{y, y', z'\}$ so that the edges $y''y, yy', y'z', z'z$ are straight.


Consider the graph $G'$ obtained from $G-\{x, u_1, u_2, y, y', x', z'\}$ by identifying $z$ and $y''$. Since $d^\Delta(G)\ge 3$, $v_1$ and $v_2$ cannot be on triangles. We claim that no new cycles of length from $1$ to $5$ are created, for otherwise, there is a path of length $2, 3, 4$ or $5$ from $y''$ to $z$ in $G-\{x, u_1, u_2, y, y', x', z'\}$, which together with $y, y', z'$ forms a separating $\{6,7,8\}$-cycle or good $9$-cycle, a contradiction to Lemma~\ref{SC}. Clearly, $d^\Delta(G')\ge3$. Finally, we claim that no new chord in $C_0$ is formed in $G'$, for otherwise, $y''\in C_0$ and $z$ is adjacent to a vertex on $C_0$, then there is a path between $y''$ and $z$ on $C_0$ with length at most four, which again forms a good separating cycle with $yy'x'$ of forbidden length.

By minimality of $(G,C_0)$, the $DP$-$3$-coloring of $C_0$ can be extended to a $DP$-$3$-coloring $\phi$ of $G'$. Now keep the colors of all vertices in $G'$ and color $y''$ and $z$ with the color of the identified vertex. For $u\in\{x, u_1, u_2, y, y', x', z'\}$, let $L^*(u)=L(u)\setminus\cup_{uv\in E(G)}\{c'\in L(u):(v,c)(u,c')\in C_{vu}$ and $(v,c)\in \phi\}$. Then $|L^*(z')|=|L^*(x)|=|L^*(u_2)|=|L^*(u_1)|\ge2$, $|L^*(y')|=|L^*(x')|=3$ and $|L^*(y)|\ge1$. So we can extend $\phi$ to a $DP$-$3$-coloring of $G$ by properly coloring $y$ and coloring $z'$ with the color of $y$, and coloring $u_2, u_1, x, x', y'$ in order, a contradiction.
\end{proof}

We use $\mu(x)$ to denote the initial charge of a vertex or face $x$ in $G$ and $\mu^*(x)$ to denote the final charge after the discharging procedure. We use $\mu(v) = 2d(v) - 6$ for each vertex $v$,  $\mu(f) = d(f) - 6$ for each face $f\not=C_0$, and $\mu(C_0)=d(C_0)+6$. Then by Euler formula,   $\sum_{x\in V(G)\cup F(G)} \mu(x)=0.$ To lead to a contradiction, we shall prove that $\mu^*(x)\ge 0$ for all $x\in V\cup F$ and $\mu^*(C_0)$ is positive.
For shortness, let $F_k=\{f: \text{ $f$ is a $k$-face and } V(f)\cap C_0\ne\emptyset\}$. 


We use the following discharging rules:

\begin{enumerate}[(R1)]
\item Each internal $4^+$-vertex gives $\frac{3}{2}$ to its incident $3$-face, and $\frac{1}{2}$ to its base or incident $(3,3,3,3,3,4)$-face.  Each internal $4$-vertex gives $1$ to its adjacent $(3,3,3)$-face and $\frac{1}{2}$ to its incident $6$-faces that are not adjacent to its adjacent $3$-face, and each internal $5^+$-vertex gives $2$ to its adjacent $(3,3,3)$-face and $\frac{1}{2}$ to its incident $6$-faces that are not adjacent to its adjacent $3$-face.

\item Each $7^+$-face or non-internal $6$-face other than $C_0$ gives $1$ to each of its adjacent internal $3$-faces and the rest to the outer face.  Each internal $6$-face gives $\frac{1}{2}$ to its adjacent internal $3$-face when it shares an $(3,4^+)$-edge with the $3$-face,  or contains a $4^+$-vertex that is not adjacent to a $(3,3,3)$-face, or it is a $(3,3,3,3,3,3)$-face.




\item The outer face $C_0$ gets $\mu(v)$ from each $v\in C_0$, gives $3$ to each intersecting $3$-face and $1$ to each adjacent bad $6$-face with an internal $3$-face.
\end{enumerate}

We first check the final charge of vertices in $G$. By (R3), each vertex on $C_0$ has final charge $0$.  So let $v$ be an internal vertex of $G$.  Then by Lemma~\ref{minimum1}, $d(v)\ge 3$.   Note the $\mu^*(v)=0$ if $d(v)=3$.

Let $d(v)=k\ge 5$. If $v$ is on a $3$-face, then it is not adjacent to other $3$-faces, so by (R1), it gives $\frac{3}{2}$ to the $3$-face, $\frac{1}{2}$ to each other incident face and possibly $\frac{1}{2}$ to its base (at most one by definition), so $\mu^*(v)\ge 2k-6-\frac{3}{2}-\frac{1}{2}\cdot k=\frac{3}{2}(k-5)\ge 0$.   If $v$ is adjacent to a $3$-face, then it is not on or adjacent to other $3$-faces, so by (R1), it gives at most $2$ to the $3$-face, and $\frac{1}{2}$ to each other incident $6$-faces that are not adjacent to the $3$-face, hence $\mu^*(f)\ge 2k-6-2-\frac{1}{2}(k-2)> 0$.  If $f$ is not on or adjacent to $3$-faces, then by (R1), its final charge is $\mu^*(f)\ge 2k-6-\frac{1}{2}k> 0$.

Let $d(v)=4$. Let $f_i$ for $1\le i\le4$ be the incident face of $v$ in clockwise order. First assume that $v$ is on a $3$-face.  By Lemma~\ref{big structure Lemma} (b) and (c), $v$ cannot be a roof and on a $(3,3,3,3,3,4)$-face at the same time, so by (R1), $v$ gives out at most $\frac{1}{2}$ to $6$-faces and $\frac{3}{2}$ to the $3$-face, thus $\mu^*(v)\ge 0$.  Now assume that $v$ is adjacent to a $3$-face.  Then $v$ cannot be adjacent other $3$-faces. By (R1), $v$ gives at most $1$ to the $3$-face and $\frac{1}{2}$ to each of the other $6$-faces that are not adjacent to the $3$-face, and $\mu^*(v)\ge 2-1-\frac{1}{2}\cdot 2=0$.  Finally assume that $v$ is not on or adjacent to any $3$-face.  Then by (R1), $\mu^*(v)\ge 2-\frac{1}{2}\cdot 4=0$.

Now we check the final charge of faces.  Let $d(f)=3$. If $f$ contains vertices of $C_0$, then by (R3), $\mu^*(f)=0$. So we assume that $f$ is internal.  If $f$ is incident with at least two $4^+$-vertices, then $f$ gets $\frac{3}{2}$ from each of the incident $4^+$-vertices by (R1), thus $\mu^*(f)\geq-3+\frac{3}{2}\cdot2=0.$ If $f$ is incident with exactly one $4^+$-vertex, then $f$ gets $\frac{3}{2}$ from the incident $4^+$-vertex by (R1) and gets $\frac{1}{2}$ from each of the incident $6^+$-face by (R2). 

Now we assume that $f=[x'y'z']$ is an internal $(3,3,3)$-face. Let $xx',yy',zz'\in E(G)$ and let $f_1, f_2, f_3$ be the three adjacent faces of $f$ so that $f_1$ contains $x,x',y',y$ and $f_2$ contains $y,y',z',z$. If $f$ is adjacent to three $7^+$- or non-internal $6$-faces, then it gets $1$ from each by (R2) and its final charge is at least $0$.  So we may assume that it is adjacent to an internal $6$-face, say $f_1$.  By Lemma~\ref{bad-6},  $f$ is adjacent to at least one internal $4^+$-vertex (say $y$) which is on $f_1$.  If $f$ is adjacent to three internal $6$-faces, then by Lemma~\ref{bad-6}, one of $x$ and $z$ is a $4^+$-vertex, and by Lemma~\ref{2bad-6}, either one of $x,y,z$ is a $5^+$-vertex, in which case by (R1), $\mu^*(f)\ge -3+2+1=0$, or they are all $4$-vertices, in which case by (R1), $\mu^*(f)\ge -3+1\cdot 3=0$, or one of them (say $x$) is a $3$-vertex and other two are $4$-vertices, in which case by Lemma~\ref{2bad-6}, $f_1$ and $f_3$ both contain $4^+$-vertices that are not adjacent to $f$ so by (R1) and (R2), $f$ gets $1+1$ from the two $4$-vertices and $\frac{1}{2}\cdot 2$ from $f_1$ and $f_3$.  Likewise, if $f_2$ and $f_3$ are both $7^+$- or non-internal $6$-faces, then by (R1) and (R2), $\mu^*(f)\ge -3+1+1\cdot 2=0$.   So we may assume that one of $f_2$ or $f_3$ is an internal $6$-face and the other is a $7^+$- or non-internal $6$-face.  If $f_3$ is an internal $6$-face, then by Lemma~\ref{bad-6}, $x$ or $z$ is a $4^+$-vertex, thus by (R1) $f$ gets $1\cdot 2$ from the two adjacent $4^+$-vertices and by (R2) $f$ gets $1$ from $f_2$. So we may assume that $f_2$ is an internal $6$-face and $f_3$ is a $7^+$- or non-internal $6$-face, and furthermore assume that $x,z$ are $3$-vertices and $d(y)=4$.  Now by Lemma~\ref{2bad-6},  $f_1$ and $f_2$ both contain $4^+$-vertices that are not adjacent to $f$, so by (R2), $f$ gets $\frac{1}{2}\cdot 2$ from $f_1$ and $f_2$, $1$ from $f_3$, and by (R1), $1$ from $y$, and we have $\mu^*(f)\ge -3+3=0$.

Since $G$ contains no $4$- or $5$-cycles, we only need to check the $6^+$-faces. If $d(f)\geq7$, then $f$ is adjacent to at most $\lfloor\frac{d(f)}{4}\rfloor$ $3$-faces, so after (R1), $\mu^*(f)\ge d(f)-6-\lfloor\frac{d(f)}{4}\rfloor\ge 0$.

Let $d(f)=6$. If $f$ is good or $f$ contains vertices of $C_0$, then $\mu^*(f)=0$. Now we assume that $f$ is an internal bad $6$-face that is adjacent to an internal $3$-face $f'=[xyz]$ on edge $xy$ with $d(x)\le d(y)$.

\begin{itemize}
\item If $d(x),d(y)\geq4$, then $f$ gives nothing to $f'$. So $\mu^*(f)=\mu(f)=0$.

\item If $d(x)=3$ and $d(y)\geq5$, then $f$ gets $\frac{1}{2}$ from $y$ and gives $\frac{1}{2}$ to $f'$. Thus $\mu^*(f)\geq6-6+\frac{1}{2}-\frac{1}{2}=0$.

\item If $d(x)=d(y)=3$, then by (R2), $f$ gives $\frac{1}{2}$ to $f'$ only when $f$ contains a $4^+$-vertex that is not adjacent to the $3$-face, in which case, $f$ gets $\frac{1}{2}$ from the $4^+$-vertex by (R1).  So we always have $\mu^*(f)\ge 0$.


\item Let $d(x)=3$ and $d(y)=4$. If $f$ is an internal $(3,3,3,3,3,4)$-face, then it gets $\frac{1}{2}$ from $y$, or else  $f$ contains another $4^+$-vertex, from which $f$ gets $\frac{1}{2}$. Thus $\mu^*(f)\geq6-6+\frac{1}{2}-\frac{1}{2}=0$.
\end{itemize}

We call a bad $6$-face $f$ in $F_6$ {\em special} if $f$ is adjacent to one internal $3$-face.

\begin{lem}
The final charge of $C_0$ is positive.
\end{lem}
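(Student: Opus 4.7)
The plan is to express $\mu^*(C_0)$ as a closed formula capturing every charge transfer through the boundary and then show strict positivity by a short case analysis on $d(C_0) \in \{3,6,7,8,9\}$. Combining (R2) and (R3),
\[
\mu^*(C_0) = d(C_0) + 6 + \sum_{v \in V(C_0)}(2d(v)-6) + \sum_{f \in \mathcal{G}}\bigl(d(f)-6-\alpha(f)\bigr) - 3t - b,
\]
where $\mathcal{G}$ is the family of $7^+$-faces and non-internal $6$-faces other than $C_0$, $\alpha(f)$ counts the internal $3$-faces adjacent to $f$, $t$ is the number of $3$-faces intersecting $C_0$, and $b$ is the number of special bad $6$-faces adjacent to $C_0$.

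The case $d(C_0) = 3$ is immediate. Because $C_0$ is itself a triangle, $d^\Delta \ge 3$ forbids any other triangle within distance $3$ of $C_0$, so $t = 0$; it also forbids a bad $6$-face adjacent to both $C_0$ and an internal triangle (which would place two triangles within distance $2$), so $b = 0$, as well as $\alpha(f) = 0$ for every non-internal $6$-face $f$ near $C_0$. Since a counterexample must satisfy $G \ne C_0$, the interior contributes at least one face of $\mathcal{G}$, and with the trivial estimate $\sum_v (2d(v) - 6) \ge -6$ we obtain $\mu^*(C_0) \ge 9 - 6 > 0$.

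For $d(C_0) \in \{6,7,8,9\}$ I plan to amortize the identity along $E(C_0)$. For each edge $e = uv$ of $C_0$ let $f_e$ be the inside face incident to $e$; chordlessness of $C_0$ together with Lemma~\ref{SC} forces $f_e$ and $f_{e'}$ to be distinct whenever $e$ and $e'$ are non-adjacent, and also pins down how $\alpha(f_e)$ and the local degrees of $u, v$ can interact. A key local observation I will use repeatedly is that the two non-$C_0$ edges of any intersecting $3$-face must lie on $7^+$-faces, for otherwise a $6$-face there combines with part of $C_0$ into a separating cycle of length at most $9$, forbidden by Lemma~\ref{SC}. This single fact provides enough surplus from $\mathcal{G}$ to offset the $-3$ paid per unit of $t$, and Lemmas~\ref{bad-6}--\ref{2bad-6} play the analogous role controlling the neighbourhood of a special bad $6$-face; similarly, every $2$-vertex on $C_0$ is automatically flanked by long faces whose surplus compensates for its $-2$ contribution.

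The main obstacle is the configuration in which an intersecting $3$-face and a special bad $6$-face sit on consecutive edges of $C_0$, so that the pair withdraws $4$ from $C_0$ and the surrounding inflow must cover the whole deficit. I expect a further application of Lemma~\ref{SC} to rule out the tightest such pairings, leaving in each case either a $7^+$-face in $\mathcal{G}$ with surplus at least $2$ or a $C_0$-vertex of degree $\ge 4$ (contributing $2d(v) - 6 \ge 2$) to absorb the loss. After the finitely many subcases are checked for each $d(C_0) \in \{6,7,8,9\}$, we conclude $\mu^*(C_0) \ge 1 > 0$, completing the discharging argument.
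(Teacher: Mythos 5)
Your proposal is an outline rather than a proof: the decisive cases $d(C_0)\in\{8,9\}$ are exactly the ones you defer with ``I expect a further application of Lemma~\ref{SC}'' and ``after the finitely many subcases are checked,'' and those subcases are where essentially all of the work lies. More importantly, there is a gap that no amount of subcase-checking along your stated lines can close: you never invoke the hypothesis that $C_0$ is a \emph{good} $9$-cycle. That hypothesis is not decorative. The discharging is tight, and it degenerates to $\mu^*(C_0)\le 0$ precisely on the configurations of Figure~1 --- for instance a $9$-cycle $C_0$ meeting three special bad $6$-faces gives $e'=f_6=3$, $f_3=x=0$ and hence $\mu^*(C_0)=0$ in the paper's accounting. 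The paper escapes every such tight case only by observing that it forces $C_0$ to be a bad $9$-cycle, contradicting the hypothesis. A proof that never distinguishes good from bad $9$-cycles would establish a false statement, so some step of yours must silently fail on the bad configurations; since you do not say which, the argument cannot be repaired as written.

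Your ``key local observation'' is also unsound: if an intersecting $3$-face has a non-$C_0$ edge on a $6$-face, the cycle obtained by combining that $6$-face with an arc of $C_0$ need not be separating (it may bound faces of $G$), its length depends on which arc of $C_0$ is used, and even when it is a separating $9$-cycle it may be \emph{bad}, in which case Lemma~\ref{SC} says nothing. The mechanism the paper actually uses to cover the deficit is different and is missing from your plan: in each residual case one locates a single long face $f$ adjacent to $C_0$ that contains many consecutive $2$-vertices of $C_0$ (and possibly a triangle at one end), and lower-bounds the leftover charge it sends to $C_0$ by $x\ge d(f)-6-\lceil (d(f)-k)/4\rceil$ for a suitable $k$; your per-edge amortization never produces a bound of this type. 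Finally, your opening ``identity'' is only an inequality, since a non-internal $6$-face may also receive charge from internal $4^+$-vertices under (R1) before passing ``the rest'' to $C_0$ --- harmless in direction, but it should not be asserted as an equality.
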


\begin{proof}
Assume that $\mu^*(C_0)\le 0$.  Let $E(C_0, G-C_0)$ be the set of edges between $C_0$ and $G-C_0$. Let $e'$ be the number of edges in $E(C_0, G-C_0)$ that is not on a $3$-face and $x$ be the number of charges $C_0$ receives by (R3). Let $f_3=|F_3|$ and $f_6$ be the number of special $6$-faces. By (R3) and (R4), the final charge of $C_0$ is
\begin{align*}
\mu^*(C_0)&=d(C_0)+6+\sum_{v\in C_0} (2d(v)-6)-3f_3-f_6+x\\
&= d(C_0)+6+\sum_{v\in C_0} 2(d(v)-2)-2d(C_0)-3f_3-f_6+x\\
&=6-d(C_0)+2|E(C_0,G-C_0)|-3f_3-f_6+x\\
&\ge 6-d(C_0)+f_3+2e'-f_6+x,
\end{align*}
where the last equality follows from that each $3$-face in $F_3$ contains two edges in $E(C_0,G-C_0)$.

Note that for each special $6$-face $f$, no edge in $E(C_0,G-C_0)\cap E(f)$ is on $3$-faces. Then $e'\ge f_6$. When $e'=f_6$, $C_0$ is adjacent to at least three $6$-faces, so $e'=f_6\ge 3$, and it follows that $d(C_0)=9$ and $x=f_3=0$ and $e'=f_6=3$, in which case, we have a bad $9$-cycle as in the second graph in Figure~\ref{bad-9}. So we may assume that $e'\ge f_6+1$. Thus
$$\mu^*(C_0)\ge 6-d(C_0)+f_3+2e'-f_6+x\ge6-d(C_0)+f_3+x+f_6+2.$$

Since $\mu^*(C_0)\le 0$, $d(C_0)\ge 8$.  So if $f_6=1$, then $d(C_0)=9$ and $(f_3, x, e')=(0,0,2)$. Now that the $6$-face shares at most four vertices with $C_0$,  $C_0$ is adjacent to a $10^+$-face $f$ that contains at least five consecutive $2$-vertices on $C_0$, thus by (R3), $x\ge d(f)-6-\lceil\frac{d(f)-7}{4}\rceil>0$, a contradiction. 

Therefore, we may assume that $f_6=0$, and $f_3+2e'+x\le d(C_0)-6\le 3$. So $e'\le 1$.

Let $e'=1$. It follows that $f_3\le 1$.
\begin{itemize}
\item Let $f_3=1$.  Then $d(C_0)= 9$ and  $x=0$. Since $C_0$ is not a bad $9$-cycle, $C_0$ is adjacent to  a $7^+$-face $f$ and $f$ is adjacent to the $3$-face,  so by (R3), $f$ gives at least $1$ to $C_0$, that is, $x\geq1$, a contradiction.

\item Let $f_3=0$. Then $d(C_0)\ge 8$ and $x\le 1$. Note that $C_0$ is adjacent to a $9^+$-face $f$ that contains at least $d(C_0)-1$ consecutive $2$-vertices, thus by (R3), $f$ gives at least $d(f)-6-\lceil\frac{d(f)-(d(C_0)+1)}{4}\rceil\ge 2$ to $C_0$, a contradiction to $x\le 1$.
\end{itemize}

Finally let $e'=0$.  Then $f_3+x\le d(C_0)-6$, and each edge in $E(C_0,G-C_0)$ is on a $3$-face. Note that we may assume that $f_3>0$, for otherwise $G=C_0$.  Now follow the boundaries of the $7^+$-faces adjacent to $C_0$, each of the $f_3$ triangles is encountered twice, thus the $7^+$-faces do not give charge to at least $2f_3$ triangles, so $x\ge 2f_3$. It follows $f_3=1$ and $d(C_0)=9$.  In this case, $C_0$ is adjacent to a $10^+$-face $f$ that contains at least $7$ consecutive $2$-vertices on $C_0$. Then by (R3), $f$ gives at least $d(f)-6-\lceil\frac{d(f)-9}{4}\rceil\ge 3$ to $C_0$, a contradiction to $x=2$.
\end{proof}

\section{Proof of Theorem~\ref{main2}}



Let $(G,C_0)$ be a counterexample to Theorem ~\ref{main2} with minimum number of vertices, where $C_0$ is a $7$-, $8$-, $9$- or $10$-cycle.  Let $G$ be a plane graph.

\begin{lemma}\label{minimum}
For each $v\notin C_0$, $d(v)\ge 3$.
\end{lemma}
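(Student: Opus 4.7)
The plan is the standard vertex-deletion minimality argument. Suppose for contradiction that there is an internal vertex $v \notin C_0$ with $d(v) \le 2$, and consider the graph $G' = G - v$. The first step is to verify that $(G', C_0)$ still satisfies all the hypotheses of Theorem~\ref{main2}: deleting a vertex cannot create any new cycles, so $G'$ is planar and contains no $\{4,5,6\}$-cycles; the distance between triangles can only grow, so $d^\Delta(G') \ge d^\Delta(G) \ge 2$; and because $v \notin C_0$, the cycle $C_0$ is preserved intact in $G'$ as a $7$-, $8$-, $9$-, or $10$-cycle. Since $|V(G')| < |V(G)|$, the minimality of the counterexample $(G, C_0)$ applies to $(G', C_0)$, so the prescribed DP-$3$-coloring of $C_0$ extends to a DP-$3$-coloring $\phi$ of $G'$.

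It remains to extend $\phi$ to $v$. For each neighbor $u$ of $v$ in $G$, the matching $M_{uv}$ between $L_u$ and $L_v$ contains at most one edge incident with $(u, \phi(u))$, which forbids at most one color in $L(v)$. Since $d(v) \le 2$ and $|L(v)| = 3$, at most two colors are forbidden, so at least one color remains available for $v$. Assigning such a color to $v$ produces a DP-$3$-coloring of $G$ that extends the given coloring of $C_0$, contradicting the assumption that $(G,C_0)$ is a counterexample.

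There is no real obstacle here; the only thing to check carefully is that the hypotheses of Theorem~\ref{main2} transfer from $G$ to $G'$, which is immediate because deleting a vertex preserves the absence of short cycles, preserves or increases $d^\Delta$, and leaves $C_0$ untouched since $v$ is internal.
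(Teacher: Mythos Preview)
Your proof is correct and follows essentially the same approach as the paper's own proof. The paper's version is terser---it simply states that any $\mathcal{M}_L$-coloring of $G-v$ extends to $G$ because at most $d(v)\le 2$ colors are forbidden for $v$---while you spell out in more detail why $(G-v,C_0)$ again satisfies the hypotheses of Theorem~\ref{main2}, but the underlying argument is identical.
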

\begin{proof}
Let $v\notin C_0$ be a vertex with $d(v)\le2$. Any $\mathcal{M}_L$-coloring of $G-v$ can be extended to $G$ since $v$ has at most $d(v)$ elements of $L(v)$ forbidden by the colors selected for the neighbors of $v$, while $|L(v)|=3$.
\end{proof}


\begin{lemma}\label{separating}
The graph $G$ has no separating cycles of length $7, 8, 9$ or $10$.
\end{lemma}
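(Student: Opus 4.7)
The plan is to mirror the argument of Lemma~\ref{SC} from the previous section, adjusted for the allowed cycle lengths $\{7,8,9,10\}$ in this theorem.

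First I would dispose of the possibility that $C_0$ itself is a separating cycle. If $C_0$ separates $G$, I can split $G$ along $C_0$ into the two subgraphs $G_1 = G - \mathrm{int}(C_0)$ and $G_2 = G - \mathrm{ext}(C_0)$. Both are planar, inherit the exclusion of $\{4,5,6\}$-cycles and the condition $d^{\Delta} \ge 2$, contain $C_0$ as a cycle of the correct length, and each has strictly fewer vertices than $G$ (since $C_0$ separates, both $\mathrm{int}(C_0)$ and $\mathrm{ext}(C_0)$ are nonempty). By the minimality of the counterexample $(G,C_0)$, the prescribed DP-$3$-coloring of $C_0$ extends to each $G_i$, and glueing along $C_0$ yields an extension to $G$, a contradiction. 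Hence I may henceforth assume $C_0$ is the outer face of the embedding.

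Next, suppose toward contradiction that there is a separating cycle $C \ne C_0$ with $d(C) \in \{7,8,9,10\}$. Since $C_0$ is the outer face, every vertex of $C_0 \setminus C$ lies in $\mathrm{ext}(C)$. Let $H_{\mathrm{out}} = G - \mathrm{int}(C)$; this subgraph contains both $C_0$ and $C$, is planar with no $\{4,5,6\}$-cycles, and still satisfies $d^{\Delta} \ge 2$. Because $\mathrm{int}(C) \ne \emptyset$, we have $|V(H_{\mathrm{out}})| < |V(G)|$, so by minimality the DP-$3$-coloring of $C_0$ extends to a DP-$3$-coloring of $H_{\mathrm{out}}$. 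In particular this produces a DP-$3$-coloring of $C$. Now let $H_{\mathrm{in}}$ be the subgraph induced by $C \cup \mathrm{int}(C)$, with $C$ as its outer face. Again $H_{\mathrm{in}}$ inherits planarity, the exclusion of $\{4,5,6\}$-cycles, and $d^{\Delta} \ge 2$; the bounding cycle $C$ has length in $\{7,8,9,10\}$; and $|V(H_{\mathrm{in}})| < |V(G)|$ because $\mathrm{ext}(C)$ is nonempty (it contains the vertices of $C_0 \setminus C$). By minimality applied to the pair $(H_{\mathrm{in}}, C)$, the coloring of $C$ obtained above extends to a DP-$3$-coloring of $H_{\mathrm{in}}$.

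Combining the colorings on $H_{\mathrm{out}}$ and $H_{\mathrm{in}}$, which agree on their common part $C$, yields a DP-$3$-coloring of $G$ extending the prescribed coloring on $C_0$, contradicting the assumption that $(G,C_0)$ is a counterexample. The main (and only) subtlety is verifying that both the outer and inner pieces qualify as genuinely smaller instances of Theorem~\ref{main2}: the hereditary properties (planarity, forbidden cycle lengths, triangle distance) transfer immediately to subgraphs, so the real content is observing that both $\mathrm{int}(C)$ and $\mathrm{ext}(C)$ contain at least one vertex, which is exactly the definition of $C$ being a separating cycle distinct from the outer face $C_0$.
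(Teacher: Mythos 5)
Your proposal is correct and follows essentially the same two-step argument as the paper: first rule out $C_0$ itself being separating by extending to its interior and exterior separately, then for a separating $C\ne C_0$ apply minimality to $G-int(C)$ and subsequently to $C\cup int(C)$, glueing along $C$. The extra care you take in verifying that both pieces inherit planarity, the forbidden cycle lengths, and $d^{\Delta}\ge 2$ is sound and matches what the paper leaves implicit.
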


\begin{proof}
First of all, we note that $C_0$ cannot be a separating cycle. For otherwise, we may extend the coloring of $C_0$ to both inside $C_0$ and outside $C_0$, respectively, then combine them to get a coloring of $G$. So we may assume that $C_0$ is the outer face of the embedding of $G$.

Let $C\not=C_0$ be a separating cycle of length $7, 8, 9$ or $10$ in $G$.  By the minimality of $G$, the coloring of $C_0$ can be extended to $G-int(C)$. Now that $C$ is colored, thus by the minimality of $G$ again, the coloring of $C$ can be extended to $int(C)$. Combine inside and outside of $C$, we have a coloring of $G$, which is extended from the coloring of $C_0$, a contradiction.
\end{proof}

So we may assume that $C_0$ is the outer face of the embedding of $G$ in the rest of this paper. Like in the previous section, we may assume that $C_0$ is chordless.  A face is {\em internal} if none of its vertices is on $C_0$, and a vertex is {\em internal} if it is not on $C_0$.

\begin{lemma}\label{7-face}
Let $f$ be an internal $7$-face that is adjacent to an internal $(3,3,3)$-face and is incident with at least six $3$-vertices.  Then none of the followings occur
\begin{itemize}
\item[(a)] $f$ contains a $(3,4)$-edge that is on an internal $(3,3, 4)$-face.
\item[(b)] $f$ contains seven $3$-vertices and is adjacent to an internal $(3,3,4^+)$-face.
\item[(c)] $f$ is adjacent to another internal $(3,3,3)$-face.
\end{itemize}
\end{lemma}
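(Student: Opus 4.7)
The plan is to show each of (a), (b), (c) is reducible using Lemma~\ref{near-2-degenerate} with $k=3$. In each sub-case I would identify a subgraph $H \subseteq V(G) \setminus V(C_0)$ built from $V(f)$ together with the extra vertices of the adjacent triangle(s), exhibit an explicit linear ordering of $H$, and verify the three hypotheses of Lemma~\ref{near-2-degenerate}. Once established, the minimality of $(G,C_0)$ gives a DP-$3$-coloring of $G - H$ extending the pre-coloring of $C_0$, and Lemma~\ref{near-2-degenerate} then extends this coloring to $G$, contradicting that $(G,C_0)$ is a counterexample.

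Throughout, write $f = [v_1 v_2 \cdots v_7]$ and let $f_1 = [v_1 v_2 u]$ be the internal $(3,3,3)$-face adjacent to $f$, so $u$, $v_1$, and $v_2$ are all $3$-vertices and $v_1$'s three neighbors $v_2, v_7, u$ all lie in $H$ (so that $v_1$ can always be placed first). For case (a), let $f_2 = [v_i v_{i+1} w]$ be the offending $(3,3,4)$-face; the constraint $d^\Delta \ge 2$ forces the shared edge to be $v_4 v_5$ or $v_5 v_6$, so up to symmetry I would assume $f_2 = [v_4 v_5 w]$ and set $H = V(f) \cup \{u, w\}$. I would then order $H$ as $v_1, u, v_2, v_3, v_4, w, v_5, v_6, v_7$ when $v_4$ is the $4$-vertex, and as $v_1, v_7, v_6, v_5, w, v_4, v_3, v_2, u$ when $v_5$ is the $4$-vertex. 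For case (b) I would leave the $4^+$-vertex $w$ outside, taking $H = V(f) \cup \{u\}$ with ordering $v_1, u, v_2, v_3, v_4, v_5, v_6, v_7$; because every $v_i$ now has degree $3$, each intermediate vertex has at most two neighbors in $\{v_1, \ldots, v_{i-1}\} \cup (G - H)$ regardless of $d(w)$. For case (c), let $f_3 = [v_j v_{j+1} u']$ be the second internal $(3,3,3)$-face adjacent to $f$; $d^\Delta \ge 2$ forces $j \in \{4, 5\}$, and with $H = V(f) \cup \{u, u'\}$ I would use $v_1, u, v_2, v_3, v_4, u', v_5, v_6, v_7$ when $j=4$, or $v_1, u, v_2, v_3, v_4, v_5, u', v_6, v_7$ when $j=5$.

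The main obstacle, and the step I would check most carefully, is verifying the outside-neighbor counts. I need to confirm that neighbors such as $v_3'$, $v_6'$, $v_7'$, $u''$, and $w'$ genuinely lie in $G - H$ rather than coinciding with other $H$-vertices or with vertices of $C_0$, since any such coincidence could raise an intermediate vertex's constraint count above $2$ or force the last vertex to lose its external neighbor. A coincidence of this kind would produce either a chord of $f$ yielding a cycle of length in $\{3, 4, 5, 6\}$ (forbidden by hypothesis), a separating $\{7,8,9,10\}$-cycle (forbidden by Lemma~\ref{separating}), or a second triangle at distance less than $2$ from $f_1$, $f_2$, or $f_3$ (violating $d^\Delta \ge 2$). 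Once these degenerate configurations are ruled out, the three conditions of Lemma~\ref{near-2-degenerate} follow by routine checking along each explicit ordering above.
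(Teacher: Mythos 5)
Your treatments of (a) and (b) are sound: the orderings you give for (a) satisfy all three hypotheses of Lemma~\ref{near-2-degenerate}, and in (b) leaving the $4^+$-vertex $w$ outside $H$ is exactly the right move, since $v_4$ and $v_5$ then each count $w$ as a single outside neighbor regardless of $d(w)$. The genuine gap is in case (c). The hypothesis only guarantees six $3$-vertices on $f$; since $v_1,v_2,v_4,v_5$ are forced to be $3$-vertices by the two $(3,3,3)$-faces, one of $v_3,v_6,v_7$ may have degree $4$ or more, and this is precisely the boundary case the lemma is invoked for in the discharging (it is used to show that a $7$-face adjacent to two internal $(3,3,3)$-faces carries at least two $4^+$-vertices). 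Your ordering $v_1,u,v_2,v_3,v_4,u',v_5,v_6,v_7$ already fails there: if $d(v_3)=4$, then $v_3$ has one earlier neighbor ($v_2$) plus two neighbors in $G-H$, i.e.\ three constraints. Worse, no reordering of $H=V(f)\cup\{u,u'\}$ can repair this. Summing the quantity ``earlier neighbors plus outside neighbors'' over $H$ gives $|E(G[H])|+(\text{edges leaving } H)=11+6=17$, while the hypotheses of Lemma~\ref{near-2-degenerate} cap this sum at $0+3+2\cdot 7=17$; so every intermediate vertex must meet the bound $2$ with equality, every degree-$3$ intermediate must have exactly one later $H$-neighbor, and the $4$-vertex must precede both of its $H$-neighbors. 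Chasing these forced precedences around the triangles $\{v_1,v_2,u\}$ and $\{v_4,v_5,u'\}$ and the arcs of $f$ joining them always produces a vertex with three earlier-or-outside neighbors, for every admissible choice of first vertex (necessarily one of $v_1,v_2,v_4,v_5$) and last vertex. Your ordering does work when all seven vertices of $f$ have degree $3$, but that does not cover the statement.

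This is why the paper abandons the greedy reduction for part (c) and switches to the DP-specific identification trick: after renaming lists so that the matchings along $v_7v_6, v_6v_5, v_5v_4, v_5v_{45}, v_{45}v$ are straight, it deletes $\{v_4,v_5,v_6,v_{45}\}$ and identifies $v_7$ with the third neighbor $v$ of $v_{45}$, verifies via Lemma~\ref{separating} and $d^{\Delta}\ge 2$ that no short cycle, close triangles, or chord of $C_0$ is created, and then recolors the deleted vertices by giving $v_6$ and $v_{45}$ the same color so that they impose only one constraint on $v_5$. Some device of this kind is needed to absorb the possible $4^+$-vertex on $f$; the purely greedy reduction you propose cannot close case (c).
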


\begin{proof}
Let $f=[v_1v_2\cdots v_7]$, and $v_1v_2$ be the $(3,3)$-edge that is on an internal $(3,3,3)$-face $[v_1v_2v_{12}]$. Since $d^\Delta(G)\ge2$, by symmetry we may assume that $v_4v_5$ is on a $3$-face $[v_4v_5v_{45}]$.


(a) or (b): If $d(v_4)\le4$ and $d(v_5)=3$, then let $S$ be the set of vertices listed as:
$$v_2, v_3, v_4, v_{45}, v_5, v_6, v_7, v_1, v_{12}.$$
If $d(v_5)=4$, then let $S$ be the set of vertices listed as:
$$v_1, v_7, v_6, v_5, v_{45}, v_4, v_3, v_2, v_{12}.$$
By Lemma ~\ref{near-2-degenerate}, a DP-$3$-coloring of $G-S$ can be extended to $G$, a contradiction.

(c) Suppose otherwise that the $3$-face $[v_4v_5v_{45}]$ is an internal $(3,3,3)$-face. Let $v$ be the neighbor of $v_{45}$ not on $f$. Since $f$ is incident with at least six $3$-vertices, by symmetry we may assume that $d(v_6)=3$. We can rename the lists of vertices in $\{v,v_{45},v_4,v_5,v_6,v_7\}$ so that each edge in $\{v_7v_6, v_6v_5, v_5v_4, v_5v_{45}, v_{45}v\}$ is straight.

Consider the graph $G'$ obtained from $G-\{v_6, v_5, v_4, v_{45}\}$ by identifying $v_7$ and $v$. We claim that no new cycles of length from $3$ to $6$ are created, for otherwise, there is a path of length $3, 4, 5$ or $6$ from $v$ to $v_7$ in $G-\{v_6, v_5, v_4, v_{45}\}$, which together with $v_6,v_5,v_{45}$ forms a separating cycle of length $7,8,9$ or $10$, a contradiction to Lemma~\ref{separating}. Since none of $v_7$ and $v$ is on a triangle, $d^\Delta(G')\ge2$. Finally, we claim that no new chord in $C_0$ is formed in $G'$, for otherwise, $v\in C_0$ and $v_7$ is adjacent to a vertex on $C_0$, then there is a path between $v_7$ and $v$ on $C_0$ with length at most four, which again forms a separating cycle with $v_6v_5v_{45}$ of forbidden length.

By minimality of $(G,C_0)$, the $DP$-$3$-coloring of $C_0$ can be extended to a $DP$-$3$-coloring $\phi$ of $G'$.  Now keep the colors of all other vertices in $G'$ and color $v_7$ and $v$ with the color of the identifying vertex. For $x\in\{v_4,v_5,v_6,v_{45}\}$, let $L^*(x)=L(x)\setminus\cup_{ux\in E(G)}\{c'\in L(v):(u,c)(x,c')\in C_{ux}$ and $(u,c)\in \phi\}$. Then $|L^*(v_4)|=|L^*(v_{45})|\ge2$, $|L^*(v_5)|=3$ and $|L^*(v_6)|\ge1$. So we can extend $\phi$ to a $DP$-$3$-coloring of $G$ by color $v_6$ and $v_{45}$ with the same color and then color $v_4,v_5$ in order, a contradiction.
\end{proof}


We use $\mu(x)$ to denote the initial charge of a vertex or face $x$ in $G$ and $\mu^*(x)$ to denote the final charge after the discharging procedure. We use $\mu(v) = 2d(v) - 6$ for each vertex $v$,  $\mu(f) = d(f) - 6$ for each face $f\not=C_0$, and $\mu(C_0)=d(C_0)+6$. Then by Euler formula,   $\sum_{x\in V(G)\cup F(G)} \mu(x)=0.$ To lead to a contradiction, we shall prove that $\mu^*(x)\ge 0$ for all $x\in V\cup F$ and $\mu^*(C_0)$ is positive.

For shortness, let $F_k=\{f: \text{ $f$ is a $k$-face and } V(f)\cap C_0\ne\emptyset\}$.
We call a $7$-face $f$ in $F_7$ {\em special} if $f$ is adjacent to two internal $3$-faces. We call a $4$-vertex $v$ on a $7^+$-face $f$ {\em rich to $f$} if $v$ is not on a $3$-face adjacent to $f$.

\medskip

We have the following discharging rules:

\begin{enumerate}[(R1)]
\item Each internal $3$-face gets $\frac{3}{2}$ from each incident $4^+$-vertex and then gets its needed charge evenly from adjacent faces.
\item Each internal $7$-face gets $\frac{1}{2}$ from each incident rich $4$-vertex or $5^+$-vertex.
\item After (R1) and (R2), each $7^+$-face gives all its remaining charges to $C_0$.
\item The outer face $C_0$ gets $\mu(v)$ from each $v\in C_0$, gives $3$ to each face in $F_3$ and $1$ to each special $7$-face.
\end{enumerate}


\begin{lemma}\label{CHECK}
Every vertex $v$ and every face other than $C_0$ in $G$ has nonnegative final charge.
\end{lemma}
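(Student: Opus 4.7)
The plan is to verify nonnegativity by case analysis on vertex degree and face type, leaning on the structural lemmas established above. Vertices on $C_0$ release exactly $\mu(v)$ to $C_0$ via (R4), so they finish at zero. An internal $3$-vertex starts at $\mu=0$ and is untouched. For an internal $4$-vertex $v$, the analysis splits on whether $v$ lies on a $3$-face: if so, $v$ loses $3/2$ to its $3$-face and at most $1/2$ to the single ``opposite'' incident $7$-face (the two faces sharing an edge with the $3$-face are adjacent to it, so $v$ is not rich to them); if not, $v$ is rich to each of its four incident $7$-faces and loses $1/2$ to each. Both outflows total at most $2=\mu(v)$. For an internal $k$-vertex with $k\geq 5$, the condition $d^{\Delta}\geq 2$ keeps $v$ on at most one $3$-face, so the total outflow is bounded by $\tfrac{3}{2}+\tfrac{k-1}{2}$, comfortably below $2k-6$.

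The face analysis is routine outside the $7$-face case. A $3$-face in $F_3$ receives $3$ from $C_0$ by (R4) and ends at zero. An internal $3$-face collects $3/2$ from each incident $4^+$-vertex and then, via (R1), its residual deficit ($3$, $3/2$, or $0$ in the $(3,3,3)$, $(3,3,4^+)$, and $(3,4^+,4^+)$ subcases) is split evenly among its three adjacent $7^+$-faces. An $8^+$-face $f$ has $\mu(f)=d(f)-6$ and draws nothing from vertices; because $d^{\Delta}\geq 2$ forces at least two free boundary edges between consecutive adjacent $3$-faces, $f$ is adjacent to at most $\lfloor d(f)/3\rfloor$ $3$-faces, each taking at most $1$. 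Direct checks for $d(f)\in\{8,9,10\}$ and the estimate $d(f)-6-\lfloor d(f)/3\rfloor\geq 0$ for $d(f)\geq 11$ give $\mu^*(f)\geq 0$, with any surplus passed to $C_0$ by (R3).

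The delicate case is the $7$-face $f$, where $\mu(f)=1$ is tight. A short cyclic count under $d^{\Delta}\geq 2$ shows $f$ is adjacent to at most two internal $3$-faces. When zero or one is adjacent, giving out at most $1$ leaves $\mu^*(f)\geq 0$ at once. When two are adjacent and $f\in F_7$, then $f$ is special by definition, and (R4) supplies $+1$ from $C_0$, yielding $\mu^*(f)\geq 1+1-2=0$. The hardest subcase is an internal $7$-face adjacent to two internal $3$-faces. If both are $(3,3,3)$-faces, Lemma~\ref{7-face}(c) forces $f$ to have at most five incident $3$-vertices; since four of $f$'s vertices already lie on the two adjacent $(3,3,3)$-faces (and are $3$-vertices), at least two of the three remaining vertices of $f$ are $4^+$-vertices, and these are rich to $f$ (the adjacent $3$-faces supply only $3$-vertices to $f$), so (R2) injects $+1/2$ from each, giving $\mu^*(f)\geq 1+1-2=0$. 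If one adjacent $3$-face is $(3,3,3)$ and the other is $(3,3,4^+)$, the total outflow is $3/2$, and parts~(a) and~(b) of Lemma~\ref{7-face} bound the $3$-vertex count on $f$ so that an incident $4^+$-vertex or a $5^+$-vertex from the $(3,3,5^+)$-face is present and contributes $1/2$ via (R2); the cases where both adjacent $3$-faces are $(3,3,4^+)$ or involve $(3,4^+,4^+)$-faces are strictly easier because the outflow shrinks to at most $1$.

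The main obstacle is precisely this last subcase---an internal $7$-face adjacent to two internal $3$-faces---where all three parts of Lemma~\ref{7-face} and the richness definition must be deployed in concert with (R2) to guarantee that the charge sent to adjacent $3$-faces is always offset by the charge harvested from incident $4^+$-vertices.
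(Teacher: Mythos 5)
Your proposal is correct and follows essentially the same route as the paper: the same vertex checks, the same $\lfloor d(f)/3\rfloor$ bound for $7^+$-faces, and the same use of Lemma~\ref{7-face}(a)--(c) together with rule (R2) to rescue the internal $7$-face adjacent to two internal $3$-faces. The only differences are presentational (e.g., you phrase the key step via the contrapositive of Lemma~\ref{7-face}(c) to count $4^+$-vertices, where the paper states the same bound directly), so nothing further is needed.
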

\begin{proof}
We first check the final charges of vertices in $G$. Let $v$ be a vertex in $G$. If $v\in C_0$, then by (R4) $\mu^*(v)=0$. If $v\notin C_0$, then by Lemma~\ref{minimum} $d(v)\ge3$. If $d(v)=3$, then $\mu^*(v)=\mu(v)=0$. Note that each vertex can be incident to at most one $3$-face since $d^\Delta(G)\geq 2$. Let $d(v)=4$. If $v$ is light, then $v$ gives $\frac{3}{2}$ to the incident $3$-face and $\frac{1}{2}$ to the incident $7$-face to which $v$ is rich by (R1) and (R2). If $v$ is not light, then $v$ gives at most $\frac{1}{2}$ to each incident face by (R2). In either case,  $\mu^*(v)\geq2\cdot4-6-\max\{\frac{3}{2}+\frac{1}{2}, \frac{1}{2}\cdot 4\}\geq0$.  If $d(v)\ge 5$, then $v$ gives $\frac{3}{2}$ to at most one  incident $3$-face and at most $\frac{1}{2}$ to each other incident face. So $\mu^*(v)\geq 2d(v)-6-\frac{3}{2}-\frac{1}{2}\cdot (d(v)-1)>0$.

Now we check the final charges of faces other than $C_0$ in $G$. Since $G$ contains no $4,5,6$-cycles, a $3$-face in $G$ is adjacent to three $7^+$-faces. Thus, by (R1) and (R4) each $3$-face has nonnegative final charge. Let $f$ be a $7^+$-face in $G$. By (R1) $f$ only needs to give $1$ to each adjacent internal $(3,3,3)$-face and $\frac{1}{2}$ to each adjacent internal $(3,3,4^+)$-face. Since $d^\Delta(G)\ge2$, $f$ is adjacent to at most $\lfloor \frac{d(f)}{3} \rfloor$ $3$-faces. If $d(f)\ge8$, then $\mu^*(f)\geq d(f)-6-1\cdot\lfloor\frac{d(f)}{3}\rfloor\ge 0$.
Let $d(f)=7$. Note that $f$ gives at most $1$ to each adjacent $3$-face by (R1). If $f$ is in $F_7$ or adjacent to at most one internal $3$-face, then by (R1) and (R4), $\mu^*(f)\geq7-6-\max\{1,1\cdot2-1\}=0$.

Therefore, we may assume that $f$ is an internal $7$-face and adjacent to two internal $3$-faces. If none of the $3$-faces is a $(3,3,3)$-face, or one of the two $3$-faces contains more than one $4^+$-vertex, then $f$ gives out at most $1$ to the $3$-faces, so $\mu^*(f)\ge 0$.  Thus, we may assume that $f$ is adjacent to a $(3,3,3)$-face $f_1$ and a $(3,3,3^+)$-face $f_2$. If $f_2$ shares a $(3,4^+)$-edge with $f$, then by Lemma~\ref{7-face} (a) $f$ contains a rich $4$-vertex or $5^+$-vertex, which gives $\frac{1}{2}$ to $f$ by (R2). So $\mu^*(f)\ge7-6-1-\frac{1}{2}+\frac{1}{2}=0$.
If $f_2$ shares a $(3,3)$-edge with $f$, then by Lemma~\ref{7-face} (b) and (c), $f$ contains at least one $4^+$-vertex if $f_2$ is a $(3,3,4^+)$-face and at least two $4^+$-vertices if $f_2$ is a $(3,3,3)$-face, respectively. By (R2) $f$ gets $\frac{1}{2}$ from each incident rich $4$-vertex or $5^+$-vertex. So $\mu^*(f)\ge7-6-\max\{1+\frac{1}{2}-\frac{1}{2}, 1\cdot2-\frac{1}{2}\cdot2\}=0$.
\end{proof}

\begin{lemma}
The final charge of $C_0$ is positive.
\end{lemma}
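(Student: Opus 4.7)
The plan is to mirror the argument used in the analogous lemma for Theorem~\ref{main0}. First, I would set up the charge balance at $C_0$. Let $e'$ denote the number of edges in $E(C_0, G - C_0)$ not lying on any $3$-face, and let $x$ denote the total charge transferred to $C_0$ from $7^+$-faces via (R3). Because $C_0$ is chordless and because $d^\Delta \ge 2$ forces distinct $3$-faces to be pairwise vertex-disjoint, every $f \in F_3$ contributes exactly two edges to $E(C_0, G - C_0)$, so $|E(C_0, G - C_0)| = e' + 2 f_3$. Combining this with $\sum_{v \in C_0} d(v) = 2 d(C_0) + |E(C_0, G - C_0)|$ and with the transfers in (R3) and (R4) yields
$$\mu^*(C_0) = 6 - d(C_0) + 2 e' + f_3 - f_7 + x.$$

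The next step is the key inequality $e' \ge f_7$. For each special $7$-face $f$, both adjacent $3$-faces are internal, so the edges $f$ shares with them lie strictly inside $C_0$; moreover $f$ has vertices both on and off $C_0$, so its $C_0$-vertices form one or more proper subpaths of $C_0$, each contributing two entry/exit edges to $E(C_0, G - C_0) \cap E(f)$ that lie on no $3$-face. Hence $f$ supplies at least two edges to $e'$, and a double count (each edge in $e'$ is on at most two faces) gives $2 f_7 \le 2 e'$, so $e' \ge f_7$. Consequently
$$\mu^*(C_0) \ge 6 - d(C_0) + f_3 + f_7 + x.$$

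Assuming $\mu^*(C_0) \le 0$ gives $d(C_0) \ge 6 + f_3 + f_7 + x$, so $d(C_0) \in \{7,8,9,10\}$ and $f_3 + f_7 + x \le 4$. I would then run a short case analysis. When $e' > f_7$, the extra $+2$ in the bound resolves most subcases at once; when $e' = f_7$, the double count is tight, forcing each special $7$-face to share both of its $e'$-edges with another special $7$-face, and in particular ruling out $f_7 = 1$. In each remaining profile $(d(C_0), f_3, f_7, e', x)$, the inequality $e' + 2 f_3 \ge 1$ (which holds because $G \ne C_0$) together with $f_3 + f_7 + x \le d(C_0) - 6$ forces $C_0$ to contain a long run of degree-$2$ vertices, whose bounding inner face $f$ is forced to be a large $7^+$-face whose (R3) overflow $d(f) - 6 - \lfloor d(f)/3 \rfloor$ already exceeds the remaining deficit, a contradiction.

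The main obstacle is the bookkeeping in the final case analysis. The range $d(C_0) \le 10$ is one larger than in the analogous lemma of Section~2, and the equality case $e' = f_7$ is somewhat more delicate because special $7$-faces may now be paired along shared $e'$-edges in several distinct ways; in each subcase one must verify, possibly invoking Lemmas~\ref{separating} and~\ref{7-face}, that the claimed large inner $7^+$-face is genuinely present and that its $(R3)$ contribution alone suffices to close the deficit.
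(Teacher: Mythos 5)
Your setup matches the paper's: the same charge identity $\mu^*(C_0)\ge 6-d(C_0)+f_3+2e'-f_7+x$, the same double-counting bound $e'\ge f_7$, and a case analysis over the profiles $(d(C_0),f_3,f_7,e',x)$. The gap is in the final step, and it is not mere bookkeeping. You propose to close each surviving profile by arguing that a long run of $2$-vertices on $C_0$ forces an adjacent $7^+$-face $f$ whose ``overflow'' $d(f)-6-\lfloor d(f)/3\rfloor$ exceeds the remaining deficit. But that quantity is $-1$ for $d(f)=7$ and $0$ for $d(f)\in\{8,9\}$, and in most of the critical subcases the forced face has length only $7$, $8$, or $9$ (e.g.\ $d(C_0)=10$, $e'=2$, $f_3=x=0$ forces only a $7^+$-face with four consecutive $2$-vertices). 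So the naive overflow gives no contradiction there.

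What the paper actually needs, and what your sketch is missing, is a refined lower bound on the charge $f$ passes to $C_0$ that exploits \emph{where} the $3$-faces needing charge from $f$ can sit: a triangle meeting the portion of $\partial f$ that runs along $C_0$ lies in $F_3$ and is paid by $C_0$, not by $f$. Formalizing this via a ``charge-friendly'' path $P$ on $f$ yields
$$x\ \ge\ d(f)-6-\left\lfloor\frac{d(f)+1-|V(P)|}{3}\right\rfloor\ \ge\ \frac{2}{3}\bigl(d(f)-9\bigr)+\frac{|V(P)|-1}{3},$$
which is positive as soon as the run of $C_0$-vertices on $f$ is long enough, even when $d(f)=7$. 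Every subcase of the paper's analysis is closed by this inequality; without it (or an equivalent localization of which triangles a boundary face must feed), the profiles with small $d(f)$ cannot be eliminated and the proof does not go through. A secondary, smaller issue: your claim that the equality case $e'=f_7$ ``rules out $f_7=1$'' needs the observation that equality forces every $e'$-edge to be shared by two special $7$-faces, whence $e'=f_7=d(C_0)$ and the charge count is immediately positive; this is the route the paper takes, and you should make it explicit rather than fold it into the case analysis.
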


\begin{proof}
Let $E(C_0, G-C_0)$ be the set of edges between $C_0$ and $G-C_0$. Let $e'$ be the number of edges in $E(C_0, G-C_0)$ that is not on a $3$-face and $x$ be the number of charges $C_0$ receives by (R3). Let $f_3=|F_3|$ and $f_7$ be the number of special $7$-faces. By (R3) and (R4), the final charge of $C_0$ is at least
\begin{align*}
\mu^*(C_0)&=d(C_0)+6+\sum_{v\in C_0} (2d(v)-6)-3f_3-f_7+x\\
&\ge d(C_0)+6+\sum_{v\in C_0} 2(d(v)-2)-2d(C_0)-3f_3-f_7+x\\
&\ge 6-d(C_0)+2|E(C_0,G-C_0)|-3f_3-f_7+x\\
&=6-d(C_0)+f_3+2e'-f_7+x,
\end{align*}
where the last equality follows from that each $3$-face in $F_3$ contains two edges in $E(C_0,G-C_0)$.

Let $f$ be a $7^+$-face adjacent to $C_0$.  A path on $f$ is {\em charge-friendly} if no vertex on it is on a triangle that needs charge from $f$ (which means triangles on the paths are in $F_3$). Let $P$ be a charge-friendly path on $f$. Then $f$ gives at least $d(f)-6-\lfloor\frac{d(f)+1-|V(P)|}{3}\rfloor$ to $C_0$, and thus
\begin{equation}\label{eq-4}
x\ge d(f)-6-\left\lfloor\frac{d(f)+1-|V(P)|}{3}\right\rfloor\ge \frac{2}{3}(d(f)-9)+\frac{|V(P)|-1}{3}.
\end{equation}

Since $d^\Delta(G)\ge2$, each special $7$-face must share exactly one edge or one vertex with $C_0$ and each edge in $E(C_0,G-C_0)\cap E(f)$ is not on $3$-faces. Then $e'\ge f_7$, with equality only if $e'=f_7=d(C_0)$ and $f_3=0$, in which case, $\mu^*(C_0)\ge6-d(C_0)+d(C_0)>0$. So we may assume that $e'\ge f_7+1$. Then
\begin{quote}
$f_7=0$ when $d(C_0)\le 8$, $f_7\le 1$ when $d(C_0)=9$, and $f_7\le 2$ when $d(C_0)=10$,
\end{quote}
for otherwise, $\mu^*(C_0)\ge 6-d(C_0)+f_3+2e'-f_7+x\ge6-d(C_0)+f_3+x+f_7+2>0$. Now assume that $\mu^*(C_0)\le 0$. We consider a few cases.

{\bf Case 1.} $f_7=0$.  From $\mu^*(C_0)\ge 6-d(C_0)+f_3+2e'-f_7+x=6-d(C_0)+f_3+2e'+x$, we have $f_3+2e'+x\le d(C_0)-6\le 10-6=4$. So $e'\le 2$.

Let $e'=2$. Then $d(C_0)=10$ and $f_3=x=0$. It follows that $G$ is adjacent to a $7^+$-face $f$ that contains at least four consecutive $2$-vertices, thus $f$ contains a charge-friendly path $P$ with $|V(P)|\ge6$, so by \eqref{eq-4} $x\ge \frac{2}{3}(7-9)+\frac{6-1}{3}>0$, a contradiction.

Let $e'=1$. It follows that $f_3\le 2$.
\begin{itemize}
\item If $f_3=2$, then $d(C_0)=10$ and $x=0$. Now $C_0$ is adjacent to a $7^+$-face that contains a path with a triangle at one end and having at least two consecutive $2$-vertices, thus, $f$ contains a charge-friendly path $P$ with $|V(P)|\ge6$, so by \eqref{eq-4} $x\ge \frac{2}{3}(7-9)+\frac{6-1}{3}>0$, a contradiction.

\item If $f_3=1$, then $d(C_0)\ge 9$. Note that $C_0$ contains at most three $3^+$-vertices. If $d(C_0)=9$, then $x=0$ and $C_0$ is adjacent to a $7^+$-face that contains a path with a triangle at one end and having at least three consecutive $2$-vertices. Thus, $f$ contains a charge-friendly path $P$ with $|V(P)|\ge7$, so by \eqref{eq-4} $x\ge \frac{2}{3}(7-9)+\frac{7-1}{3}>0$, a contradiction. If $d(C_0)=10$, then $x\le1$ and $C_0$ is adjacent to a $8^+$-face that contains a path with a triangle at one end and having at least four consecutive $2$-vertices. Thus, $f$ contains a charge-friendly path $P$ with $|V(P)|\ge8$, so by \eqref{eq-4} $x\ge \frac{2}{3}(8-9)+\frac{8-1}{3}>1$, a contradiction.


\item If $f_3=0$, then $d(C_0)\ge 8$ and $x\le2$. Note that $C_0$ is adjacent to a $9^+$-face $f$ that contains at least $d(C_0)-1$ consecutive $2$-vertices, thus $f$ contains a charge-friendly path of at least $d(C_0)+1$ vertices, so $x\ge \frac{2}{3}(9-9)+\frac{d(C_0)+1-1}{3}>2$ by \eqref{eq-4}, a contradiction.
\end{itemize}

Finally let $e'=0$.  Then $f_3+x\le d(C_0)-6$, and each edge in $E(C_0,G-C_0)$ is on a $3$-face. Note that we may assume that $f_3>0$, for otherwise $G=C_0$.  Now follow the boundaries of the $7^+$-faces adjacent to $C_0$, each of the $f_3$ triangles is encountered twice, thus the $7^+$-faces do not give charge to at least $2f_3$ triangles.  
So $x\ge 2f_3$. It follows $f_3=1$ and $d(C_0)\ge 9$.  In this case, $C_0$ is adjacent to a $9^+$-face $f$ that contains at least $d(C_0)-2$ consecutive $2$-vertices and a triangle at one end, thus $f$ contains a charge-friendly path of at least $d(C_0)+2$ vertices, so So by \eqref{eq-4}, $x\ge \frac{2}{3}(9-9)+\frac{d(C_0)+2-1}{2}>3$, a contradiction.


{\bf Case 2.} $f_7=1$. As $\mu^*(C_0)\ge 6-d(C_0)+f_3+2e'-f_7+x\ge 6-d(C_0)+f_3+x+f_7+2$, either $d(C_0)=9$ and $(f_3, x, e')=(0,0,2)$, or $d(C_0)=10$ and $f_3+x+2e'\le 5$. In the former case, $C_0$ is adjacent to a $9^+$-face that contains seven $2$-vertices, thus by \eqref{eq-4}, $x\ge\frac{2}{3}(9-9)+\frac{7-1}{3}>0$, a contradiction. Consider the latter case. It follows that $e'=2$ and $f_3+x\le 1$. So if $f_3=0$, then $C_0$ is adjacent to a $9^+$-face that contains eight consecutive $2$-vertices, thus $x\ge\frac{2}{3}(9-9)+\frac{8-1}{3}>2$ by \eqref{eq-4}, a contradiction; if $f_3=1$, then $C_0$ is adjacent to a $7^+$-face $f$ that contains at least three consecutive $2$-vertices and a triangle at one end, thus $f$ contains a charge-friendly path of at least $6$ vertices, so thus $x\ge\frac{2}{3}(7-9)+\frac{6-1}{3} >0$ by \eqref{eq-4}, a contradiction again.

{\bf Case 3.} $f_7=2$. Then $\mu^*(C_0)\ge 6-10+f_3+2e'-f_7+x\ge -4+f_3+x+f_7+2$, we have $f_3=x=0$ and $e'=3$. Thus, $C_0$ is a $10$-face and the two $7$-faces must share an edge in $E_0$. Then $C_0$ is adjacent to a $8^+$-face $f'$ that contains seven consecutive $2$-vertices. Thus $x\ge \frac{2}{3}(8-9)+\frac{7-1}{3}>0$ by \eqref{eq-4}, a contradiction.
\end{proof}

{\bf Acknowledgement:} The authors would like to thank Runrun Liu for her careful reading and many valuable comments.

\end{document}